\newtheorem{theorem}{Theorem}[section]
\newtheorem{lemma}{Lemma}[section]
\newtheorem{corollary}{Corollary}[section]
\newtheorem{definition}{Definition}[section]
\newtheorem{proposition}{Proposition}[section]
\newtheorem{remark}{Remark}[section]
\newcommand\norm[1]{\left\lVert#1\right\rVert}
\declaretheoremstyle[headfont=\normalfont]{normalhead}
\newtheoremstyle{mydef}
{\topsep}{\topsep}%
{}{}%
{\itshape}{}
{\newline}
{%
	\rule{\textwidth}{1.5pt}\\*%
	\thmname{#1}~\thmnumber{#2}\thmnote{\-\ #3}.\\*[-1.5ex]%
	\rule{\textwidth}{1.5pt}}%
\numberwithin{equation}{section}
\begin{document}
	
\title{2D Thin obstacle problem with data at infinity}
\author{Runcao Lyu}
\address{Department of Mathematical Sciences, University of Science and Technology of China, Hefei, China}
\email{lvruncao@mail.ustc.edu.cn}

\author{Zikai Ye}
\address{Department of Mathematical Sciences, University of Science and Technology of China, Hefei, China}
\email{yzkustc@mail.ustc.edu.cn}

\maketitle

\begin{abstract}
In this paper, we consider the thin obstacle problem in $\mathbb{R}^2$ with data at infinity. We first prove the existence and uniqueness of it. Then we show that its symmetric solutions are actually half-space solutions. Our results are needed when classifying the half-space $(2k-\frac{1}{2})$-homogeneous solutions to the thin obstacle problems in $\mathbb{R}^3$. It is a generalization of one part of Savin-Yu's work \cite{savin2021halfspace} on classifying the half-space $\frac{7}{2}$-homogeneous solutions.
\end{abstract}

\section{Introduction}
Thin obstacle problem, also called Signorini Problem \cite{signorini1959questioni}, is a type of free boundary problem which studies minimizers of the Dirichlet energy
$$
\int_{B_{1}^{+}}|\nabla u|^2
$$
in $\{u\in W^{1,2}(B_{1}^{+})\text{ | }u=g\text{ on }\partial B_1\cap \{x_{n+1}>0\},\text{ }u\geq 0\text{ on } B_1\cap\{x_{n+1}=0\}\},$ where $B_{1}^{+}=B_1\cap \{x_{n+1}>0\}.$ The conditions $u=g\text{ on }\partial B_1\cap \{x_{n+1}>0\},\text{ }u\geq 0\text{ on } B_1\cap\{x_{n+1}=0\}$ are understood in the sense of trace.

By standard variational approach and even reflection, these minimizers satisfy the following Euler-Lagrange equations
\begin{equation}\label{thin}
	\begin{cases}
		\Delta u\leq 0\ \ &in\ B_1\\
		u\geq 0\ \ &in\ B_1 \cap \{x_{n+1}=0\}\\
		\Delta u=0 \ &in\ B_1 \cap (\{u>0\}\cup \{x_{n+1}\neq 0\}).
	\end{cases}
\end{equation}
in a Euclidean ball $B_1$ in $\mathbb{R}^{n+1}$. Since the odd part (with respect to $x_{n+1}$) of $u$ is harmonic and vanishes on $\{x_{n+1}=0\}$, it suffices to consider even solution. It is obvious that a rotation around $x_{n+1}$-axis or positive multiples (we call it normalization) of a solution $u$ to \eqref{thin} is still a solution.

Early results about the regularity of $u$ were given by Richardson \cite{richardson1978variational} and Ural'tseva \cite{ural1987regularity}. Then the optimal regularity of $u$ was proved by Athanaspoulos and Caffarelli \cite{athanasopoulos2006optimal} that $u$ is locally Lipschitz in $B_1$ and locally $C^{1,1/2}$ in $B_1\cap \{x_{n+1}\geq 0\}$. With the help of optimal regularity, several studies were done to study the contact set $\Lambda(u):=\{u=0\}\cap\{x_{n+1}=0\}$ and the free boundary $\Gamma(u):=\partial_{\mathbb{R}^{n}}\Lambda(u)$ as follows.

By making use of Almgren's frequency and monotonicity formula \cite{almgren1979dirichlet}, Athanasopoulos-Caffarelli-Salsa \cite{athanasopoulos2008structure} showed that for any $q\in \Lambda(u)$, there is a constant $\lambda_q$ such that 
$$
\norm{u}_{L^2(\partial B_r(q))}\sim r^{\frac{n}{2}+\lambda_q}
$$
as $r\to 0$ and up to subsequence,
$$
u_{q,r}:=r^{\frac{n}{2}}\frac{u(r\cdot+q)}{\norm{u}_{L^2(\partial B_r(q))}}\to u_0.
$$
as $r\to 0$. The constant $\lambda_q$ is called the frequency of $u$ at $q$ and $u_0$ is a $\lambda_q$-homogeneous solution to \eqref{thin}, called a blow-up profile of $u$ at $q$. A constant $\lambda\in\mathbb{R}$ is called an admissible frequency if it is a frequency of some solution $u$ to \eqref{thin} at some point $q\in\Lambda(u)$. There are many results on the classification of admissible frequencies
$$
\Phi:=\{\lambda\in\mathbb{R}\text{ | there is a non-trivial } \lambda \text{-homogeneous solution to \eqref{thin}}\}
$$
and $\lambda$-homogeneous solutions
$$
\mathcal{P}_\lambda:=\{u\text{ | }u\text{ is a solution to \eqref{thin} with }x\cdot \nabla u=\lambda u\}
$$
for each admissible frequency $\lambda\in\Phi$.

The classification in the case of $\mathbb{R}^2$ is completed. See Petrosyan-Shahgholian-Ural'tseva \cite{petrosyan2012regularity}. The set of all admissible frequencies is 
$$
\Phi=\mathbb{N}\cup\{2k-\frac{1}{2}\text{ | }k\in \mathbb{N}\}.
$$
The homogeneous solutions with integer frequencies, up to normalization, are actually even reflections of polynomials. As for the $(2k-\frac{1}{2})$-homogeneous solutions, up to normalization, one has
$$
\mathcal{P}_{2k-\frac{1}{2}}=\{au_{2k-\frac{1}{2}}:=ar^{2k-\frac{1}{2}}\cos((2k-\frac{1}{2})\theta)\text{ | }a\geq 0\},
$$
where $(r,\theta)$ is the polar coordinate of $\mathbb{R}^2$. They vanish on the half line $\{x_1\geq0\text{, }x_2=0\}$ and satisfy
$$
\text{spt}(\Delta u)=\Lambda(u)=\{x_1\leq0\text{, }x_2=0\}
$$

In higher dimensions, the classifications of admissible frequencies and homogeneous solutions remain open. Athanasopoulos-Caffarelli-Salsa \cite{athanasopoulos2008structure} showed that the set of admissible frequencies satisfies
$$
\mathbb{N}\cup\{2k-\frac{1}{2}\text{ | }k\in \mathbb{N}\}\subset \Phi \subset \{1,\frac{3}{2}\}\cup [2,+\infty),
$$
while the left-hand side is obtained by extending the homogeneous solutions from $\mathbb{R}^2$. Moreover, Columbo-Spolaor-Velichkov \cite{colombo2020direct} and Savin-Yu \cite{savin2021fine} showed that there is a frequency gap around each integer.

When it comes to the classification of homogeneous solutions, the case of integral frequencies are studied by Figalli-Ros-Oton-Serra \cite{figalli2020generic} and Garofalo-Petrosyan \cite{garofalo2009some}. It is shown by Athanasopoulos-Caffarelli-Salsa that 
$$
\mathcal{P}_{\frac{3}{2}}=\{\text{Normalizations of } u_{\frac{3}{2}}\}.
$$

For the homogeneous solutions with frequency $2k-\frac{1}{2}$, $k=2,3,...$, it is still tough now to classify them. However, for half-space solution $u$ of frequency $\frac{7}{2}$ in $\mathbb{R}^3$, i.e. up to normalization,
$$
\text{either spt}(\Delta u)\subset \{x_2\leq 0\text{, }x_3=0\} \text{, or spt}(\Delta u)\supset \{x_2\leq 0\text{, }x_3=0\},
$$
all the half-space solutions $u$ of frequency $\frac{7}{2}$ in $\mathbb{R}^3$ we currently know consist of (up to normalization)
$$
\mathcal{F}_1=\{u_{\frac{7}{2}}+a_1x_1u_{\frac{5}{2}}+a_2(x_{1}^2-\frac{1}{5}r^2)u_{\frac{3}{2}}\text{ | }0\leq a_2\leq 5\text{, }a_{1}^2\leq\Gamma(a_2)\},
$$
where
$$
\Gamma(a_2):=\text{min}\{4(a_2(1-\frac{1}{5}a_2)\text{, }\frac{24}{25}a_2(\frac{7}{2}-\frac{3}{10}a_3))\}\text{, }
$$
$$
(r,\theta)\text{ is the polar coordinate to }(x_2,x_3)\text{ - plane}.
$$

Savin-Yu \cite{savin2021halfspace} proved that all $\frac{7}{2}$-homogeneous solutions that are close to $\mathcal{F}_1$ actually lies in $\mathcal{F}_1$ (up to normalization). Then they characterized the rate of convergence of blow-up and proved the regularity of the contact set $\Lambda_{\frac{7}{2}}$ which consists of all points in $\Lambda$ with frequency $\frac{7}{2}$. The main idea of their proof is to use the homogeneity of $u$ and reduce thin obstacle problem in $B_1$ to $\mathbb{S}^2$. Since all the difficulties in the thin obstacle problem occur near the free boundary, we  need to consider thin obstacle problem around $\mathbb{S}^2\cap \{r=0\}$, that is,  near east and west poles. Direct calculation implies that the corresponding solutions on these opposite spherical caps should satisfy a kind of (almost) symmetry.  At the infinitesimal level, it reduces to the problem in 2D with data at infinity and the corresponding two solutions should be (almost) symmetric.

Similar result for general half-space $(2k-\frac{1}{2})$-homogeneous solutions remains unknown. But motivated by Savin-Yu's method \cite{savin2021halfspace}, it is clear from above that if we want to study half-space $(2k-\frac{1}{2})$-homogeneous solutions for $k\geq 2$, we need to consider the solution to the 2D thin obstacle problem with data at infinity that is close to a linear combination of $u_{l-\frac{1}{2}}$, $l=1,2,...,2k,$ which is the main purpose of this paper.

As a building block, we first prove the existence and uniqueness of 2D thin obstacle problem. The uniqueness is shown by maximum principle. To prove the existence, we construct a barrier function and meanwhile, we estimate finer expansion of the solution $u$. With the building block, we find the Fourier coefficients vanish on sufficiently large circles. The corresponding statement on small spherical caps imply the boundedness of the solution. For the case of frequency $\frac{7}{2}$, see Appendix A in Savin-Yu \cite{savin2021halfspace}.

Then we characterize a pair of symmetric solutions to the 2D thin obstacle problem. We first construct two auxiliary odd symmetric polynomials to indicate spt($\Delta u$), the support of $\Delta u$ as a measure. Using these polynomials, we argue by contradiction to prove that spt($\Delta u$) has only one connected component in $\mathbb{R}$, that is to say, $u$ is a half-space solution. In Savin-Yu \cite{savin2021halfspace}, they also argued by contradiction but they carefully counted the number of each zero of these polynomials to get a contradiction with the degree of these polynomials. However, not all zeros can be found in the general case. As a consequence, their method does not work in general. Instead, we focus on the odevity of the multiplicities of certain zeros to obtain a contradiction, which does not have to take all zeros into account. This is the main improvement of our work. After that, we finish our characterization and extend it to the case of almost symmetric solutions, which is truly needed for the spherical case.

This paper is organized as follows:

In Section 2, we give detailed statement of 2D thin obstacle problem with data at infinity and main results. In Section 3, some preliminaries for the paper are introduced. In Section 4, we prove the existence and uniqueness of the problem. Moreover, we show that the Fourier coefficients of the solution on large circles vanish. In Section 5, we characterize a pair of symmetric solutions and extend it to almost symmetric case.

\section{Main Results}
To classify two blow-up solutions around opposite spherical caps, Savin-Yu \cite{savin2021halfspace} constructed two polynomials and counted the number of zeros of these polynomials. However, this method does not work if we study half-space $(2k-\frac{1}{2})$-homogeneous solutions for $k\geq 4$. 

In this paper, we extend the results of \cite{savin2021halfspace} on 2D thin obstacle problem with data at infinity to the general case. To be more specific, instead of counting the number of zeros of the polynomials, we check the odevity of the multiplicities of certain zeros of these polynomials to get a contradiction, which works for general cases.

Recall the definition from \eqref{basedef}. For $p=u_{2k-\frac{1}{2}}+\sum\limits_{l=1}^{2k-1}a_lu_{2k-\frac{1}{2}-l}$, we study the solutions to the thin obstacle problem in $\mathbb{R}^2$ with data $p$ at infinity:
\begin{equation}\label{2D}
	\begin{cases}
		&u \text{ solves \eqref{thin} in } \mathbb{R}^2,\\ 
		&\sup\limits_{\mathbb{R}^2}|u-p|<+\infty.
	\end{cases}
\end{equation} 

\begin{remark}\label{basedef}
Similar to the definition of $u_{2k-\frac{1}{2}}$ in $\mathbb{R}^{n+1}$, we can define $u_{k-\frac{1}{2}}(r,\theta):=r^{k-\frac{1}{2}}\cos((k-\frac{1}{2})\theta)$ for all $k\in \mathbb{Z}$, where $(r,\theta)$ is the polar coordinate in $(x_n,x_{n+1})$ We say $u$ solves \eqref{thin} in $\mathbb{R}^2$ if $u$ solves \eqref{thin} in $B_R$, for any $R>0$.
\end{remark}

With similar approach as in Savin-Yu \cite{savin2021halfspace}, we prove the existence and uniqueness of \eqref{2D} as a building block. We also show that the Fourier coefficients of the solution vanishes along big circles, which is needed to bound the solution of thin obstacle problem on spherical caps. 

\begin{theorem}\label{start}
For $|a_l|\leq1,\ l=1,...,2k-1$, there is a unique solution $u$ to \eqref{2D}. Moreover, for any $N>0$, there are $b_j\in \mathbb{R}$. $1\leq j\leq N$ such that $|b_j|\leq M$, 
$$
\biggl|u-\biggl(p+\sum_{j=1}^{N}b_ju_{\frac{1}{2}-j}\biggr)\biggr|\leq M|x|^{-N}u_{-\frac{1}{2}}\ \text{for all }\ x\in\mathbb{R}^2
$$
and
$$
\int_{\partial B_R}\biggl[u-\biggl(p+\sum_{j=1}^{N}b_ju_{\frac{1}{2}-j}\biggr)\biggr]\cdot\cos((l-\frac{1}{2}))=0
$$ 
for all $1\leq l\leq N\text{ and }R\geq M$, where $M>0$ is a universal constant.
\end{theorem}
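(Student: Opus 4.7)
The plan is to prove existence, uniqueness, and the asymptotic expansion in turn, with the Fourier orthogonality as an immediate corollary of the expansion.

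\textbf{Existence.} I would first solve the thin obstacle problem on $B_R$ with Dirichlet data $p|_{\partial B_R}$, obtaining the unique minimizer $u_R$, and then establish a uniform $L^\infty$ bound $\|u_R - p\|_\infty \le C$ independent of $R$. The key ingredient is a pair of super- and sub-solutions of the form $p \pm C_0 \Phi$, with $\Phi$ built from the decaying modes $u_{-1/2}, u_{-3/2}, \dots$ so as to dominate the boundary residual of $p$ on $\partial B_R$ while respecting the obstacle sign condition on the thin set. Interior regularity for thin obstacle solutions then gives equicontinuity on compacta, and a diagonal extraction produces a subsequential limit $u$ solving \eqref{2D}.

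\textbf{Uniqueness.} Given two solutions $u_1, u_2$, let $v := u_1 - u_2$. On the open set $\{v > 0\}$, one has $u_1 > u_2 \ge 0$ on the thin set, so $\Delta u_1 = 0$ there while $\Delta u_2 \le 0$ holds globally, giving $\Delta v \ge 0$. Combined with $v = 0$ on $\partial\{v > 0\}$, this implies $v^+$ is subharmonic on all of $\mathbb{R}^2$; similarly $v^-$. Since $v$ is bounded, the 2D Liouville theorem forces $v \equiv c$ constant. The case $c \neq 0$ would force both $u_i$ to be globally harmonic functions with $|u_i - p|$ bounded, which is incompatible with the fractional homogeneity $2k - 1/2$ of the leading term of $p$, since no entire polynomial matches it.

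\textbf{Expansion and orthogonality.} Since $p(x_1, 0) \sim x_1^{2k-1/2} \to +\infty$ as $x_1 \to +\infty$ and $|u - p|$ is bounded, $u$ is strictly positive on $\{x_1 > R_0, x_2 = 0\}$ for some $R_0$, so $\Lambda(u)$ outside $B_{R_0}$ lies in the negative $x_1$-axis. Hence $u$ is harmonic on the slit exterior $\{r > R_0\} \setminus \{x_1 \le 0, x_2 = 0\}$ and vanishes where the slit meets $\Lambda(u)$. Fourier expanding in the half-integer basis adapted to this slit yields
\[
u(r,\theta) = \sum_{n\ge 0}\bigl(A_n\, u_{n+1/2}(r,\theta) + B_n\, u_{-(n+1/2)}(r,\theta)\bigr)
\]
on the exterior slit. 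Matching with $p$ and using the $L^\infty$ bound $|u - p| \le C$ forces $A_n$ to equal the corresponding coefficient of $p$ for $n \le 2k-1$ and to vanish for $n \ge 2k$; setting $b_j := B_{j-1}$ gives the claimed expansion, with $|b_j|$ controlled through iterative refinement of the bound under the hypothesis $|a_l| \le 1$. The remainder $\sum_{j > N} b_j u_{1/2-j}$ decays radially like $r^{-N-1/2}$, and since each $\cos((j-1/2)\theta)$ vanishes at $\theta = \pm\pi$ proportionally to $\cos(\theta/2)$, the pointwise envelope $M|x|^{-N} u_{-1/2}$ captures both the decay and the angular vanishing on the slit; on the bounded region $|x| \le R_0$ the stated bound is trivial because $|x|^{-N} u_{-1/2}$ is large or singular there. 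The Fourier orthogonality against $\cos((l - 1/2)\theta)$ for $l \le N$ is then immediate, as the remainder consists only of modes $\cos((j - 1/2)\theta)$ with $j \ge N + 1$.

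The main obstacle is establishing the uniform bound on $|b_j|$ simultaneously with the barrier construction in the existence step; both rely on a careful iterative improvement of the Fourier matching that propagates the constants cleanly across all orders of the expansion.
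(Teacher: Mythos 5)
Your proposal follows the same overall scaffolding as the paper — solve on $B_R$, pass to the limit, expand the exterior part in half-integer modes — but it diverges at two places, one in a nice way and one with a genuine gap.

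The uniqueness step is a genuinely different and arguably cleaner route. You work directly with $v=u_1-u_2$, note that $v$ is subharmonic on $\{v>0\}$ (since there $\Delta u_1=0$ while $\Delta u_2\le 0$), upgrade $v^\pm$ to globally subharmonic, and invoke the 2D Liouville theorem for bounded subharmonic functions to get $v\equiv c$, which is then ruled out because the leading term $u_{2k-\frac12}$ of $p$ is not matched by any entire harmonic function. The paper instead first proves the far-field structure with a barrier, Kelvin-transforms $u_1-u_2$ into $B_M$, applies Theorem \ref{TaylorExpansion}, and finishes with a maximum principle. Your argument is shorter and self-contained (it does not need the far-field claim), and I believe it is correct, though you should state explicitly that $v^+v^-\equiv0$ forces one of the constants to vanish.

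The critical gap is in the existence/expansion step. From $|u-p|\le C$ and $p\to+\infty$ on $\{x_1>0,x_2=0\}$ you deduce that there is no contact on the far positive axis, so $\Lambda(u)\cap\{r>R_0\}\subset\{x_1\le 0,\,x_2=0\}$. But for the slit-domain Fourier expansion to make sense you need much more, namely that $u$ vanishes \emph{identically} on $\{x_1\le -M,\,x_2=0\}$: the exterior region must be an honest slit domain, with $u=0$ on the whole slit. Containment of the contact set in the negative axis does not give this; it is a priori possible for $u$ to be strictly positive on pieces of the negative axis far from the origin, in which case $u$ is harmonic two-sidedly across those pieces and the domain is not the slit exterior. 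The paper closes this gap with two separate barrier arguments: the local one-parameter family $\varphi=(|x_1-x_0|^2-|x_2|^2)/d^3$ in the uniqueness proof, and the global barrier $Q=U_{-\tau}q$ (a translate of a polynomial-type thin obstacle solution, Lemma \ref{barrier}) which satisfies $Q\ge p$ for $r\ge M$ and vanishes on $\{x_1\le-\tau,\,x_2=0\}$, forcing $u_n\le Q$ and hence $u_n=0$ there. Your barrier $p\pm C_0\Phi$ with $\Phi$ built from decaying modes $u_{-\frac12},u_{-\frac32},\dots$ does not fill this role: those modes are singular at the origin, so $p+C_0\Phi$ is not a supersolution on all of $B_R$, and (depending on which modes you take) $\Delta\Phi$ alternates sign on the slit, so superharmonicity is not automatic either. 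Without the vanishing of $u$ on the far slit, the exterior Laurent-type expansion and hence the claimed remainder bound $M|x|^{-N}u_{-\frac12}$ and the Fourier orthogonality are not justified.
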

\begin{remark}
We will denote $b_j=b_j[a_1,...,a_{2k-1}]$.	
\end{remark}

Before we state our second result, for simplicity, we shall introduce some definitions. We first define a transform operator $U_{\tau}$ as
\begin{equation}\label{transform}
	U_{\tau}(x_1,x_2)=(x_1+\tau,x_2),\ U_{\tau}(f)(x)=f(U_{-\tau}x).
\end{equation}

Then we give the definitions of conjugacy and symmetry
\begin{definition}

(i) For each $p$ that can be written as the form $p=u_{2k-\frac{1}{2}}+\sum\limits_{l=1}^{2k-1}a_lu_{2k-\frac{1}{2}-l}$. We say $q$ is conjugate to $p$ if
$$
q=u_{2k-\frac{1}{2}}+\sum\limits_{l=1}^{2k-1}(-1)^la_lu_{2k-\frac{1}{2}-l.
}$$
(ii) We say $u$ and $v$ are symmetric if there exists a constant $\tau$ such that $U_{\tau}(u)$ is conjugate to $U_{-\tau}(v)$. 
\end{definition}
\begin{remark}
From the definition, we see that in particular, symmetric solutions are half-space solutions.
\end{remark}

The following theorem says that under anti-symmetric conditions, two solutions to 2D thin obstacle problem with conjugate data at infinity are half-space solutions and symmetric. As stated in the introduction, if we restrict the solutions of \eqref{thin} to two opposite spherical caps in $\mathbb{S}^2$, we find that the conjugacy of data at infinity and symmetry of solutions are required. For instance, for the case of $k=2$, we can consider
$$
v_{-\frac{1}{2}}=(x_{1}^{4}-6x_{1}^2r^2-r^4)u_{-\frac{1}{2}}\text{, and }v_{-\frac{3}{2}}:=(x_{1}^{5}+10x_{1}^{3}r^2-15x_1r^4)u_{\frac{3}{2}},
$$
where $(r,\theta)$ is understood as the polar coordinate of $(x_2,x_3)$. Both of them are $\frac{7}{2}$-homogeneous functions. Roughly speaking, if we restrict them to opposite spherical caps near $(\pm1,0,0)$ infinitesimally, we can view them as the counterparts of $u_{-\frac{1}{2}}$ and $u_{-\frac{3}{2}}$ in $\mathbb{R}^2$ and obtain the anti-symmetry conditions of $b_i$ by their odd symmetry with respect to $\{x_1=0\}$. Similarly, we can obtain the anti-symmetry condition for $a_l$.

\begin{theorem}\label{SymSolIn2D}
Given the function $p=u_{2k-\frac{1}{2}}+\sum\limits_{l=1}^{2k-1}a_lu_{2k-\frac{1}{2}-l}$ and let $q$ be the conjugate function of $p$ with $|a_l|\leq 1.$
Suppose that $u$ and $v$ are solutions to \eqref{2D} with the datum $p$ and $q$ at infinity respectively. Assume $b_i[a_1,a_2,\dots,a_{2k-2},a_{2k-1}]=(-1)^{i+1}b_i[-a_1,a_2,\dots,a_{2k-2},-a_{2k-1}]$, for all $1\leq i\leq 2k-2$, then $u$ and $v$ are symmetric.

Moreover, we can find universally bounded constants $\alpha_{l},\ 1\leq l\leq 2k-2$ and $\tau$ such that 
$$
u=U_\tau\biggl(u_{2k-\frac{1}{2}}+\sum\limits_{l=1}^{2k-2}\alpha_lu_{2k-\frac{1}{2}-l}\biggr),\text{ }v=U_{-\tau}\biggl(u_{2k-\frac{1}{2}}+\sum\limits_{l=1}^{2k-2}(-1)^l\alpha_lu_{2k-\frac{1}{2}-l}\biggr),
$$
\end{theorem}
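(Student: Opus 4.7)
The plan is to first show that $u$ and $v$ are both half-space solutions with contact sets that, after a common translation, coincide up to sign, and then to read off the canonical form from Theorem~\ref{start}. The anti-symmetry hypothesis on the $b_i$ is the compatibility that lets the same translation parameter $\tau$ work simultaneously for $u$ and $v$; given this, the displayed identities at the end of the theorem follow from a direct comparison of the Fourier-type expansions supplied by Theorem~\ref{start}.

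To show that $u$ is a half-space solution, I would follow the strategy described in the introduction and construct an auxiliary polynomial $P_u$ attached to $u$ whose zero set on the $x_1$-axis detects $\text{spt}(\Delta u)$. Concretely, I would subtract from $u$ a sufficiently long truncation of its expansion, take an appropriate odd-symmetric combination in the variable $x_1$, and divide by $u_{-\frac{1}{2}}$. Using the identity $\partial_{x_1}u_{m-\frac{1}{2}} = (m-\frac{1}{2})u_{m-\frac{3}{2}}$ and the explicit form of the basis functions, the resulting object restricts to a polynomial in $x_1$ on the axis, of degree bounded in terms of $k$, whose real zeros must include every free-boundary point of $\Lambda(u)$ and whose vanishing must persist throughout the interior of $\text{spt}(\Delta u)$. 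The same construction applied to $v$ yields the companion polynomial $P_v$.

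The crucial and novel step is the parity argument. Rather than counting all zeros of $P_u$ and $P_v$, I would use only the parity of their multiplicities. At an interior point of $\text{spt}(\Delta u)$ the polynomial $P_u$ must vanish to even order, because $\Delta u$ has a fixed sign as a measure and therefore $P_u$ cannot cross transversally there. At a free-boundary point the $3/2$-structure of the blow-up profile forces an odd order of vanishing. Assuming for contradiction that $\text{spt}(\Delta u)$ has at least two connected components in $\mathbb{R}$, $P_u$ accumulates more odd-multiplicity zeros than its degree, combined with the prescribed odd-in-$x_1$ symmetry, can accommodate, a contradiction. Hence $\text{spt}(\Delta u)$ is a single half-line, and likewise for $v$.

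Once $u$ and $v$ are known to be half-space solutions, a translation $\tau$ is chosen so that the contact sets of $U_\tau(u)$ and $U_{-\tau}(v)$ both become $\{x_1\le 0,\ x_2=0\}$; matching the leading $u_{2k-\frac{3}{2}}$ coefficients, together with the anti-symmetry of the $b_i$, forces a common value of $\tau$ for the two solutions. Applying Theorem~\ref{start} to $U_\tau(u)$ and $U_{-\tau}(v)$ then produces expansions of the claimed form with coefficients $\alpha_l$ and $(-1)^l\alpha_l$ respectively, and uniform boundedness of the $\alpha_l$ and of $\tau$ follows from the uniform control on the $b_j$ in Theorem~\ref{start}. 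The main obstacle I anticipate is the parity analysis: pinning down the precise order of vanishing at free-boundary versus interior points, and verifying that the degree of $P_u$ is genuinely too small to support the required number of odd-multiplicity zeros, will demand careful expansion bookkeeping and a delicate use of Theorem~\ref{start}.
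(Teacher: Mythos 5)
Your overall outline (show half-space via an auxiliary polynomial, then read off the normal form via Theorem~\ref{start}/Theorem~\ref{TaylorExpansion}) is aligned with the paper, but the two key steps are where your plan diverges in ways that matter.

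First, the polynomial. You propose to build $P_u$ by subtracting a truncated expansion from $u$, taking an odd-in-$x_1$ combination, and dividing by $u_{-1/2}$. The paper instead takes
$$
P(t)=\operatorname{Re}\bigl(\partial_{x_1}u-i\partial_{x_2}u\bigr)^2(t,0),\qquad
Q(t)=\operatorname{Re}\bigl(\partial_{x_1}v-i\partial_{x_2}v\bigr)^2(t,0).
$$
The reason this specific object is used is structural: $(\partial_{x_1}u-i\partial_{x_2}u)^2$ is continuous across $\{x_2=0\}$ (the alternating Dirichlet/Neumann data make the mixed term and the two squared terms match up) and harmonic off the slit, so it is an entire holomorphic function of $x_1+ix_2$; by the decay at infinity and Liouville it is a polynomial of degree $4k-3$. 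Your construction has no such built-in harmonicity/Liouville mechanism, so it is not clear it yields a polynomial at all, and certainly not one whose sign on the axis tracks $\operatorname{spt}(\Delta u)$ in the precise way the paper exploits ($P\le 0$ on $\operatorname{spt}(\Delta u)$, $P\ge 0$ off it).

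Second, and more seriously, the contradiction you propose is a degree count: ``$P_u$ accumulates more odd-multiplicity zeros than its degree can accommodate.'' This is exactly the Savin--Yu strategy that the paper explicitly says fails for general $k$ (see the remark after Proposition~\ref{half-space}): one cannot locate all the zeros of $P$, so one cannot exceed the degree. Moreover, your contradiction runs on $u$ alone; it never couples $u$ and $v$, so it cannot possibly use the hypothesis $b_i[a]=(-1)^{i+1}b_i[-a]$ — yet that hypothesis is essential (without it the conclusion is false). The paper's actual mechanism is different and local: from the anti-symmetry of the $b_i$ one derives the identity $P(t)=-Q(-t)$. Assuming a gap $(a,b)$ in $\operatorname{spt}(\Delta u)$, there is a point $d\in(a,b)$ where $\partial_{x_1}u$ changes sign, so $d$ is a zero of $\partial_{x_1}u$ of odd multiplicity $\lambda$ and hence of $P$ of multiplicity $2\lambda$. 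Via $P(t)=-Q(-t)$, $Q<0$ on $(-b,-a)$ off finitely many points, forcing $v\equiv 0$ there; then $Q=-(\partial_{x_2}v)^2$ on $(-b,-a)$, $\partial_{x_2}v\le 0$ there, so $-d$ is a zero of $\partial_{x_2}v$ of even multiplicity $\mu$ and of $Q$ of multiplicity $2\mu$. Matching multiplicities through $P(t)=-Q(-t)$ gives $\lambda=\mu$, contradicting $\lambda$ odd, $\mu$ even. That is a single-point parity clash, not a degree count.

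So the gap is concrete: you need (i) the squared complex gradient as the polynomial, not an ad hoc truncation quotient, and (ii) the cross-comparison $P(t)=-Q(-t)$ with a parity-of-multiplicity argument at one matched pair of points $d,-d$, rather than a global count of odd-multiplicity zeros against the degree. Once that is in place, your final step (translate to a common $\tau$, then apply Theorem~\ref{TaylorExpansion} via Kelvin transform and match coefficients using $P(t)=-Q(-t)$) is in the right spirit and matches the paper.
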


\section{Preliminaries}
In this section, we first introduce some notations, which is similar to \cite{savin2021halfspace}. Unless there is a special explanation, we will consider the case of $\mathbb{R}^2$ in this paper. Denote the polar coordinates of  $(x_{1},x_{2})$ by $(r,\theta)$. Then the slit is defined as
\begin{equation}\label{slit}
	\mathcal{S}:=\{\theta=\pi\}=\{x_1\leq 0, x_{2}=0\}.
\end{equation}

For a subset of $\mathbb{R}^{2}$, we can decompose it as $E=\widehat{E} \cup \widetilde{E}$, where $\widehat{E}=E\setminus\mathcal{S},$ and $\widetilde{E}=E\cap\mathcal{S}$. Given a domain $\Omega\subset\mathbb{R}^{n+1}$, a \textit{harmonic function in the slit domain $\widehat{\Omega}$} 
is a continuous  function that is even with respect to $\{x_{n+1}=0\}$ and satisfies 
\begin{equation}\label{HarmonicInSlit}
	\begin{cases}
		\Delta v=0 &\text{ in $\widehat{\Omega}$,}\\
		v=0 &\text{ in $\widetilde{\Omega}$.}
	\end{cases}
\end{equation} 

It is easy to check $u_{k-\frac{1}{2}}$ is harmonic in $\widehat{\mathbb{R}^2}$. We denote the derivatives of $u_{2k-\frac{1}{2}}$ by the following (for simplicity, we also denote $w_{2k-\frac{1}{2}}:=u_{2k-\frac{1}{2}}$):
$$
w_{2k-\frac{3}{2}}:=\frac{\partial}{\partial x_1}u_{2k-\frac{1}{2}}, w_{l-\frac{1}{2}}:=\frac{\partial}{\partial x_1}w_{l+\frac{1}{2}}\text{, }l\in \mathbb{Z},\ l\leq 2k-1.
$$

Given a non-negative integer $m$, we define the following class of $(m+\frac{1}{2})$-homogeneous functions
\begin{equation}\label{class}
	\mathcal{H}_{m+\frac 12}:=\{v: v \text{  is a harmonic function in }\widehat{\mathbb{R}^{2}}, \quad x\cdot\nabla v=(m+\frac 12)v\}.
\end{equation} 

The case in dimension $2$ is rather simple. 
Its proof follows easily by writing $v\in\mathcal{H}_{m+\frac{1}{2}}$ in the polar coordinate. Each element of $\mathcal{H}_{m+\frac{1}{2}}$ is a multiple of $u_{m+\frac{1}{2}}$:
$$
\mathcal{H}_{m+\frac{1}{2}}=\{au_{m+\frac{1}{2}}\text{ | }a\in\mathbb{R}\}.
$$
Then we can approximate the solution of \eqref{HarmonicInSlit} in $B_1$ by functions in $\mathcal{H}_{m+\frac{1}{2}}$.  

\begin{theorem}[Theorem 4.5 from \cite{Silva2014CinftyRO}]\label{TaylorExpansion}
	Let $v$ be a solution to \eqref{HarmonicInSlit} with $\Omega=B_1\subset \mathbb{R}^n$ and $\|v\|_{L^\infty(B_1)}\le 1$. Given $N\geq 0$, we can find  $v_{m+\frac{1}{2}}\in\mathcal{H}_{m+\frac{1}{2}}$ for $m=0,1,\dots, N$, such that $$\|v_{m+\frac {1}{2}}\|_{L^\infty(B_1)}\leq C$$ and
	$$
	\biggl|v-\sum_{m=0}^{N}v_{m+\frac{1}{2}}\biggr|(x)\leq C|x|^{N+1}u_{\frac {1}{2}} \text{ for $x\in B_{\frac{1}{2}}$,}
	$$where $C$ depends only on $m$.
\end{theorem}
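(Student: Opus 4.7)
The plan is to exploit the 2D angular structure by separating variables in polar coordinates $(r,\theta)$. Using evenness of $v$ in $x_2$, I may restrict to $\theta\in[0,\pi]$, where $\Delta v=0$ holds with the Dirichlet condition $v(r,\pi)=0$ on the slit and the Neumann condition $\partial_\theta v(r,0)=0$ inherited from the reflection symmetry. The angular eigenvalue problem $-\phi''=\mu\phi$ on $[0,\pi]$ with $\phi'(0)=\phi(\pi)=0$ has eigenpairs $\mu_m=(m+\tfrac12)^2$, $\phi_m(\theta)=\cos((m+\tfrac12)\theta)$, which is precisely the angular factor of $u_{m+1/2}$.

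Expanding $v(r,\theta)=\sum_{m\geq 0}a_m(r)\cos((m+\tfrac12)\theta)$ with $a_m(r)=\tfrac{2}{\pi}\int_0^\pi v(r,\theta)\cos((m+\tfrac12)\theta)\,d\theta$ and projecting the polar Laplacian onto each eigenfunction, each $a_m$ satisfies the Euler ODE $a_m''+r^{-1}a_m'-(m+\tfrac12)^2 r^{-2}a_m=0$, whose general solution is $a_m(r)=c_m r^{m+1/2}+d_m r^{-(m+1/2)}$. Boundedness of $v$ at the origin forces $d_m=0$, so I set $v_{m+1/2}:=c_m u_{m+1/2}\in\mathcal{H}_{m+1/2}$. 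The uniform bound $|c_m|\leq\tfrac{2}{\pi}\int_0^\pi|v(1,\theta)|\,d\theta\leq 2\|v\|_{L^\infty(B_1)}\leq 2$ immediately yields $\|v_{m+1/2}\|_{L^\infty(B_1)}\leq 2$. Elliptic regularity of $v$ on $\widehat{B_1}$ justifies the required integration by parts in $\theta$ and convergence of the Fourier series.

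The main obstacle is the pointwise tail estimate, which must carry the explicit angular factor $u_{1/2}$ rather than only a power of $|x|$. For this I would establish the Chebyshev-type inequality $|\cos((m+\tfrac12)\theta)|\leq C_m\cos(\theta/2)$ on $[0,\pi]$, which follows from the identity $\cos((2m+1)\phi)=\cos\phi\cdot P_m(\cos^2\phi)$ with $P_m$ a polynomial of degree $m$ (both sides vanish to the same order at $\theta=\pi$); the constant $C_m:=\|P_m\|_{L^\infty[0,1]}$ grows only polynomially in $m$. Combining with geometric decay for $r\leq\tfrac12$,
\begin{equation*}
\Bigl|v-\sum_{m=0}^N v_{m+1/2}\Bigr|(r,\theta)\leq 2\cos(\theta/2)\sum_{m>N}C_m r^{m+1/2}\leq C_N\, r^{N+3/2}\cos(\theta/2)=C_N|x|^{N+1}u_{1/2}(x),
\end{equation*}
which is the desired expansion. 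The hardest part is verifying the Chebyshev inequality and checking that the constants $C_m$ grow mildly enough to absorb into the geometric tail; the remaining steps are essentially bookkeeping of standard Fourier series arguments.
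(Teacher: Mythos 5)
Your argument is correct, and it is closely aligned with what the paper itself indicates. The paper does not write out a proof of Theorem \ref{TaylorExpansion}: it cites De Silva--Savin (Theorem 4.5 of \cite{Silva2014CinftyRO}, proved there in general dimension via slit-domain Schauder theory) and then remarks that \emph{in 2D the statement follows from a square-root transformation plus ordinary Taylor expansion}. Your proof is the Fourier-series incarnation of that same 2D shortcut: the conformal substitution $z\mapsto z^2$ maps the half-disk to the slit disk, turning $v$ into an honest harmonic function whose homogeneous Taylor pieces pull back exactly to $r^{m+1/2}\cos((m+\tfrac12)\theta)$, while you instead diagonalize the angular Laplacian directly and solve the resulting Euler ODE. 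The two are equivalent; yours is more computational but avoids invoking a reflection and extension argument, whereas the square-root transform reduces everything to the classical Taylor remainder bound and makes the analyticity transparent.

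Two small polishing points. First, you lean on ``elliptic regularity justifies the integration by parts,'' which is fine but deserves a sentence: for fixed $r\in(0,1)$, the slit near $(-r,0)$ is a smooth hypersurface away from the origin, $v$ vanishes there and is harmonic on each side, so $v(r,\cdot)$ is smooth up to $\theta=\pm\pi$ and the boundary terms in the two integrations by parts vanish (one uses $v(r,\pi)=0$, the other $\cos((m+\tfrac12)\pi)=0$ and $\partial_\theta v(r,0)=0$); the only low-regularity point of $v$ is the slit tip $r=0$, which never enters since you only use boundedness of $a_m$ as $r\to0^+$. Second, the Chebyshev inequality can be obtained in one line without examining the coefficient growth of $P_m$: writing $\phi=\theta/2\in[0,\pi/2]$ and using $|\sin(kt)|\le k|\sin t|$,
\[
|\cos((2m+1)\phi)|=\bigl|\sin\bigl((2m+1)(\tfrac{\pi}{2}-\phi)\bigr)\bigr|\le (2m+1)\sin(\tfrac{\pi}{2}-\phi)=(2m+1)\cos\phi,
\]
so $C_m=2m+1$, which is more than enough to absorb into the geometric tail for $r\le\tfrac12$ and yields $C|x|^{N+1}u_{1/2}$ with $C=C(N)$ as required.
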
 
\begin{remark}
	In the 2D case, this theorem can be directly derived by using a square-root transformation and the Taylor expansion near $0$.
\end{remark}

In the following two sections, we will generalize the results of Appendix B in \cite{savin2021halfspace} from data $p=u_{\frac{7}{2}}+a_1 u_{\frac{5}{2}}+a_2 u_{\frac{3}{2}}+a_3 u_{\frac{1}{2}}$ to the general case $p=u_{2k-\frac{1}{2}}+\sum\limits_{l=1}^{2k-1}a_lu_{2k-\frac{1}{2}-l}$. 

\section{Existence and Uniqueness}
In this section, we prove Theorem \ref{start}. First, we prove the existence and uniqueness of the solution to the thin obstacle problem in $\mathbb{R}^2$ with data $p$ at infinity \eqref{2D}. Then as a corollary, we deduce that Fourier coefficients of the solution vanish along big circles. Recall that $p=u_{2k-\frac{1}{2}}+\sum\limits_{l=1}^{2k-1}a_lu_{2k-\frac{1}{2}-l}$.

\begin{proposition}[Uniqueness]\label{uniqueness}
	Let $|a_j|\leq 1$, if there are two solutions $u_1$, $u_2$ to the system \eqref{2D}, then $u_1=u_2$.
\end{proposition}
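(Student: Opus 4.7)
The plan is to set $w := u_1 - u_2$, show that $w^+ := \max(w,0)$ is a bounded subharmonic function on all of $\mathbb{R}^2$, apply a Liouville-type theorem to force $w^+$ to be a nonnegative constant, and then use the prescribed data at infinity to exclude a strictly positive constant. Running the same argument on $-w$ then gives $w \equiv 0$, i.e.\ $u_1 \equiv u_2$. Boundedness of $w$ is immediate, since $w = (u_1 - p) - (u_2 - p)$ with each $u_i - p$ bounded by hypothesis.

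For the subharmonicity of $w^+$, I would argue as follows. On the open set $\{w > 0\}$, any point $x$ on the thin set $\{x_2 = 0\}$ satisfies $u_1(x) > u_2(x) \geq 0$, so $u_1 > 0$ in a neighborhood of $x$; by the Euler--Lagrange condition \eqref{thin}, $\Delta u_1 = 0$ there. Away from the thin set both $u_i$ are harmonic, so $\Delta u_1 \equiv 0$ distributionally on $\{w > 0\}$. Since $\Delta u_2 \leq 0$ as a Radon measure, we get $\Delta w = -\Delta u_2 \geq 0$ on $\{w > 0\}$. Because $w$ is continuous and $w^+ \equiv 0$ off $\{w > 0\}$, a direct sub-mean-value check (or Kato's inequality) yields that $w^+$ is subharmonic on all of $\mathbb{R}^2$.

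A bounded continuous subharmonic function on $\mathbb{R}^2$ is constant: by Hadamard's three-circle theorem, $M(r) := \max_{\partial B_r} w^+$ is non-decreasing and convex in $\log r$, so any upper bound forces $M$ to be constant, and then the strong maximum principle applied on any $B_R$ promotes this to $w^+ \equiv c$ for some $c \geq 0$. The same argument applied to $-w$ gives $w^- \equiv c'$ with $c' \geq 0$; since $w^+\cdot w^- \equiv 0$, at most one of $c, c'$ is nonzero, so after relabeling we may assume $w \equiv c \geq 0$.

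The main obstacle, and final step, is excluding $c > 0$. If $c > 0$, then $u_1 = u_2 + c \geq c > 0$ on $\{x_2 = 0\}$, so the contact set $\Lambda(u_1)$ is empty and $\Delta u_1 \equiv 0$, making $u_1$ genuinely harmonic on all of $\mathbb{R}^2$. But $u_1 - p$ is bounded while $p$ has leading term $u_{2k-\frac{1}{2}}$ of growth order $r^{2k-\frac{1}{2}}$, forcing $u_1$ itself to have polynomial growth of order $2k-\frac{1}{2}$. Since any globally harmonic function on $\mathbb{R}^2$ with polynomial growth is a harmonic polynomial of integer degree, $u_1$ must have degree at most $2k - 1$, which grows strictly slower than $r^{2k-\frac{1}{2}}$. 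Then $u_1 - p$ cannot be bounded --- a contradiction. Hence $c = 0$, so $u_1 \leq u_2$, and interchanging the roles of $u_1$ and $u_2$ yields $u_1 = u_2$.
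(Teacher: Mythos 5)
Your proof is correct, but it takes a genuinely different route from the paper's. The paper first proves (via a barrier construction with the function $\varphi(x_1,x_2)=(|x_1-x_0|^2-|x_2|^2)/d^3$) that each $u_j$ vanishes on $\{x_1\le -M,\ x_2=0\}$ and is harmonic in the slit domain $\widehat{\{r>M\}}$; it then Kelvin-transforms $u_1-u_2$ to $\widehat{B_M}$, applies Theorem~\ref{TaylorExpansion} with $N=0$ to extract the decay $|u_1-u_2|\le Cu_{-\frac12}$, and finishes with the maximum principle. You instead bypass both the barrier argument and the Kelvin transform: you observe that on $\{w>0\}$ (with $w=u_1-u_2$) one has $\Delta u_1=0$ and $\Delta u_2\le 0$, so $\Delta w\ge 0$ there, whence $w^+$ is a bounded subharmonic function on $\mathbb{R}^2$ and hence constant by the planar Liouville theorem for subharmonic functions; you then rule out a positive constant by noting that it would force $u_1$ to be globally harmonic, hence a harmonic polynomial of integer degree at most $2k-1$, which is incompatible with $\sup_{\mathbb{R}^2}|u_1-p|<\infty$ because $p$ has a genuinely fractional leading term $u_{2k-\frac12}$. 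Both proofs are fundamentally two-dimensional (the paper via the one-dimensionality of $\mathcal{H}_{m+\frac12}$, yours via the subharmonic Liouville theorem, which fails for $n\ge 3$). Your argument is shorter and more self-contained; the paper's version has the incidental benefit of already producing the decay estimate that it reuses, though that estimate is in any case re-derived in Proposition~\ref{existence}.
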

\begin{proof}
	Suppose that $u_1$ and $u_2$ are two solutions to \eqref{thin} in $\mathbb{R}^2$ with $\sup_{\mathbb{R}^2}|u_j-p|<+\infty.$ We first claim that there exists a constant $M>0$ such that 
	$$
	\Delta u_j=0 \text{ in }\widehat{\{r>M\}}, \text{ and } u_j=0 \text{ in }\widetilde{\{r>M\}}.
	$$
	
	To derive the first part of the claim, we point out that
	$$\sup_{\mathbb{R}^2}|u_j-p|\leq C$$
	
	However, for $M$ big enough, one has $p>C$ on $\{x_1> M, x_2=0\}$, which implies that $u_j>0$ on $\{x_1> M, x_2=0\}$ and the first part of the claim follows. 
	
	For the second part, we first briefly summarize the idea of the proof. We will construct a function $\varphi$ such that it is also a solution to the thin obstacle problem and $\varphi=0$ at $(x_0,0)$ with some fixed $x_0\leq -M$. Moreover, we show that $\varphi\geq p+C$ along the boundary of a neighborhood $\Omega$ of $(x_0,0)$. Then, we have $u_j\leq p+C\leq \varphi$ on $\partial\Omega$, and the maximum principle implies that $u_j\leq 0$ at $(x_0,0)$, which together with the definition of $u_j$ yields $u_j=0$ at $(x_0,0)$.
	
	Now, it suffices to show that for any $(x_0,0)$ where $x_0\leq -M$ , we can construct $\varphi$. First, we set the neighborhood of $(x_0,0)$ by
	$$
	\Omega:= \{(x_1,x_2):|x_1-x_0|\leq d,|x_2|\leq d \} 
	$$
	with $d$ to be determined latter. We now define 
	$$
	\varphi(x_1,x_2):=(|x_1-x_0|^2-|x_2|^2)/d^3.
	$$
	
	It follows directly that $\varphi$ is a global solution to the thin obstacle problem. and now we would show
	$$
	\varphi\geq p+C\hspace{5pt} on \hspace{5pt}\partial\Omega.
	$$
	By even symmetry, it suffices to show it holds on $\partial\Omega\cap\{x_2\geq 0\}$. For that, we point out that
	$$
	\partial_{x_2}p\leq -c|x_0|^{2k-\frac{1}{2}}
	$$
	for $M$ big enough, where $c$ is a positive universal constant. Inside $\Omega^+=\overline{\Omega}\cap\{x_2\geq0\}$, one has
	\begin{align*}
		\varphi-p&\geq (|x_1-x_0|^2-|x_2|^2)/d^3+c|x_0|^{2k-\frac{1}{2}}x_2\\ &\geq(|x_1-x_0|^2)/d^3+
		\frac{c}{2}|x_0|^{2k-\frac{1}{2}}x_2
	\end{align*}
	
	When $|x_1-x_0|=d\text{ , }0\leq x_2\leq d$, we have
	$$
	\varphi-p\geq \frac{1}{d}+\frac{c}{2}|x_0|^{2k-\frac{1}{2}}x_2\geq \frac{1}{d}\geq C,
	$$
	if we choose $d$ small enough. When $|x_1-x_0|\leq d\text{ , }|x_2|=d$, we have
	$$
	\varphi-p\geq \frac{c}{2}|x_0|^{2k-\frac{1}{2}}x_2\geq \frac{c}{2}dM^{2k-\frac{1}{2}}\geq C,
	$$
	if $M$ is sufficiently large. By even symmetry, we have $\varphi-p\geq C\text{ on }\partial \Omega$ and we finish the proof of the claim. Then, we define
	$$w(x)=(u_1-u_2)\biggl(\frac{M^2x}{|x|^2}\biggr)$$
	to be the Kelvin transform of $(u_1-u_2)$ with respect to $\partial B_M$. Then $w$ is a bounded harmonic function in $\widehat{B_M}$. By Theorem \ref{TaylorExpansion} with $N=0$, we have $|w|\leq Cu_{\frac{1}{2}}$, which implies
	$$
	|u_1-u_2|\leq Cu_{-\frac{1}{2}}\hspace{4pt}in\hspace{4pt}\mathbb{R}^2
	$$
	by inverting Kelvin transform. By maximum principle, we conclude that $u_1=u_2$. 
\end{proof}

Next we construct a barrier function which will be used to prove the existence.

\begin{lemma}[Barrier function]\label{barrier}
Let $|a_j|\leq 1$ for any $1\leq j\leq2k-1$. Then there is a solution $Q$ to \eqref{thin} such that
$$
Q\geq p\text{ on }\{r\geq M\}
$$
for a universally large constant $M$.
\end{lemma}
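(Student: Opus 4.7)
The plan is to construct $Q$ as an explicit thin obstacle solution built from the half-integer solutions $u_{2l-\frac{1}{2}}$ for $1\le l\le k$. These all vanish on the common slit $\{x_1\le 0,x_2=0\}$, so any nonnegative combination is again a thin obstacle solution; moreover, since $x_1$-translation preserves the thin obstacle structure, any candidate of the form
\[
Q = U_\tau\Bigl(u_{2k-\frac{1}{2}} + \sum_{l=1}^{k-1} c_l\, u_{2l-\frac{1}{2}}\Bigr),\qquad \tau\ge 0,\ c_l\ge 0,
\]
is automatically a global thin obstacle solution.

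The comparison $Q\ge p$ on $\{r\ge M\}$ is verified by Taylor-expanding $U_\tau$ in $\tau$. Using the identity $\partial_{x_1}u_{m+\frac{1}{2}}=(m+\frac{1}{2})u_{m-\frac{1}{2}}$, which follows from the complex-analytic description $u_{m+\frac{1}{2}}=\mathrm{Re}\, z^{m+\frac{1}{2}}$ on the slit domain, the expansion of $Q$ at infinity takes the form $u_{2k-\frac{1}{2}}+\sum_{j\ge 1}b_j\, u_{2k-\frac{1}{2}-j}$ with coefficients $b_j$ that are explicit polynomials in $(\tau,c_1,\ldots,c_{k-1})$. Matching against the corresponding coefficients $a_j$ of $p$, and using $|a_l|\le 1$, I would tune the parameters so that the leading surviving term of $Q-p$ is a strictly positive multiple of $u_{\frac{1}{2}}(r,\theta)=r^{\frac{1}{2}}\cos(\theta/2)$. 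This function is globally nonnegative on the slit domain $\theta\in(-\pi,\pi)$, so the remaining $O(r^{-\frac{1}{2}})$ contribution to $Q-p$ is dominated by the $r^{\frac{1}{2}}$ leading term for all $r\ge M$, with $M$ depending only on $k$.

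The main obstacle is the angular analysis: each intermediate $u_{2k-\frac{1}{2}-j}$ for $1\le j\le 2k-2$ is sign-changing in $\theta$, so coefficient positivity alone is not enough to conclude pointwise domination. Reducing the leading part of $Q-p$ to the globally nonnegative $u_{\frac{1}{2}}$ amounts to a finite-dimensional algebraic cancellation constraint on $(\tau,c_1,\ldots,c_{k-1})$, whose solvability with all $c_l\ge 0$ and the resulting parameters universally bounded is exactly where $|a_l|\le 1$ enters. Along the slit itself the inequality $Q\ge p$ holds automatically, since $p\equiv 0$ there and $Q\ge 0$ on $\{x_2=0\}$ by definition of a thin obstacle solution.
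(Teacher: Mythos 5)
Your ansatz $Q = U_\tau\bigl(u_{2k-\frac{1}{2}} + \sum_{l=1}^{k-1} c_l\, u_{2l-\frac{1}{2}}\bigr)$ is built so that $Q$ is \emph{automatically} a thin obstacle solution, which is exactly why it has too few degrees of freedom. Your own reduction requires cancelling the coefficients of $u_{2k-\frac{3}{2}},\dots,u_{\frac{3}{2}}$ in $Q-p$ so that the leading surviving mode is the globally nonnegative $u_{\frac{1}{2}}$; that is a system of $2k-2$ equations, while $(\tau,c_1,\dots,c_{k-1})$ is only $k$ parameters. For $k\geq 3$ this is generically unsolvable regardless of $|a_l|\leq 1$, and the uncancelled modes $u_{2k-\frac{1}{2}-j}$ with $1\leq j\leq 2k-2$ are sign-changing in $\theta$, so — as you observe yourself — positivity of their coefficients gives no pointwise lower bound. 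Even for $k=2$ the sign constraints bite: matching the $u_{\frac{5}{2}}$ coefficient forces $\tau = -\frac{2a_1}{2k-\frac{1}{2}}$, which is negative when $a_1>0$, and the $c_1\geq 0$ constraint then fails for $a_2<0$. So the proposed construction does not close.

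The paper's barrier avoids this by dropping the requirement that the building blocks individually solve the obstacle problem. It takes $q = u_{2k-\frac{1}{2}} + \sum_{l=1}^{2k-2}\alpha_l u_{2k-\frac{1}{2}-l}$ with all $2k-2$ intermediate modes and arbitrary signs, determines $(\alpha_1,\dots,\alpha_{2k-2})$ from a triangular linear system in $\tau$ so that $U_{-\tau}q$ matches $p$ through the $u_{\frac{3}{2}}$ level, and is then left with a single free parameter $\tau$. Taking $\tau$ large makes the remaining coefficient (at the $w_{-\frac{1}{2}}\sim u_{-\frac{1}{2}}\geq 0$ level) positive and dominant over the tail for $r\geq M$, and — this is the step your rigid ansatz was designed to sidestep — one separately checks that for $\tau$ large the resulting $\alpha_l\sim(-1)^l\tau^l/l!$ force $q$ to satisfy the sign conditions making it a thin obstacle solution. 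In short, where you enforce solution-hood a priori by restricting the ansatz, the paper first arranges the cancellation and then recovers solution-hood in the large-$\tau$ limit; the former does not have enough parameters to carry out the cancellation.
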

\begin{proof}
Rewrite $p$ in the basis $\{u_{2k-\frac{1}{2}},\dots,w_{\frac{1}{2}}\}$ as $p=u_{2k-\frac{1}{2}}+\sum\limits_{l=1}^{2k-1}\tilde{a}_lu_{2k-\frac{1}{2}-l}$. For $\tau>0$ to be chosen, we denote $(\alpha_1,\alpha_2,...,\alpha_{2k-2})$  to be the solution to the system
$$
\sum_{j=0}^{i}\frac{1}{j!}\alpha_{i-j}\tau^j=\tilde{a}_i,\text{ }1\leq i\leq 2k-2
$$
Here we take $\alpha_0=1$.
For example, when $k=3$, it's
\begin{align*}
	&\alpha_1+\tau=\tilde{a}_1,\ \alpha_2+\alpha_1\tau+\frac{1}{2}\tau^2=\tilde{a}_2,\ \alpha_3+\alpha_2\tau+\frac{1}{2}\alpha_1\tau^2+\frac{1}{6}\tau^3=\tilde{a}_3,\\ &\alpha_4+\alpha_3\tau+\frac{1}{2}\alpha_2\tau^2+\frac{1}{6}\alpha_1\tau^3+\frac{1}{24}\tau^4=\tilde{a}_4.
\end{align*}
We define
$$
q=u_{2k-\frac{1}{2}}+\sum\limits_{l=1}^{2k-2}\alpha_lu_{2k-\frac{1}{2}-l}.$$ 
Then Taylor's Theorem gives
$$
U_{-\tau}(q)-p\geq \biggl(\frac{1}{(2k-1)!}\tau^{2k-1}+\sum_{l=1}^{2k-2}\frac{\alpha_l}{(2k-1-l)!}\tau^{2k-1-l}-\tilde{a}_{2k-1}\biggr)w_{-\frac{1}{2}}-C\tau^{2k}w_{-\frac{1}{2}}.
$$
Choosing $\tau$ large universally and invoking the definition of $U_\tau$ from \eqref{transform}, then 
$$
U_{-\tau}(q)-p\geq 0 \text{ on } \{r\geq M\}
$$ for a universally large $M$. By choosing $\tau$ larger, if necessary, it is straightforward to verify that $(\alpha_1,\dots,\alpha_{2k-2})$ satisfies the  condition which allows $q$ to solve the thin obstacle problem in $\mathbb{R}^2$, and consequently, $Q:=U_{-\tau}q$ solves the thin obstacle problem in $\mathbb{R}^2.$
\end{proof}

Then we can prove the existence of the 2D thin obstacle problem with data $p$ at infinity.

\begin{proposition}\label{existence}
Let $|a_j|\leq 1$, for any $1\leq j\leq2k-1$. Then there is a unique solution $u$ to the 2D thin obstacle problem with data $p$ at infinity. And there is a universal constant  $M>0$ such that
$$
\sup_{\mathbb{R}^2}u\leq M,\ \Delta u=0 \text{ in } \widehat{\{r>M\}},\text{ and } u=0\text{ on } \widetilde{\{r>M\}}.
$$
Moreover, for any $N$, we can find $b_1,...,b_N$ satisfying $|b_j|\leq M$ such that
$$
\biggl|u-\biggl(p+\sum_{j=1}^{N}b_ju_{\frac{1}{2}-j}\biggr)\biggr|\leq M|x|^{-N}u_{-\frac{1}{2}}\ \text{for all }\ x\in\mathbb{R}^2.
$$
\end{proposition}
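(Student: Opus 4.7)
The plan is to obtain $u$ as a local-uniform limit of thin obstacle minimizers on balls $B_R$ with boundary data taken from the barrier $Q$ of Lemma \ref{barrier}, to localize the contact set by the same paraboloid comparison already used for uniqueness, and to extract the decay expansion via the 2D Kelvin transform together with Theorem \ref{TaylorExpansion}. Uniqueness is already supplied by Proposition \ref{uniqueness}.

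For existence, I would fix $R > M$ and let $u_R$ be the minimizer of the Dirichlet energy in $B_R$ over the admissible class $\{w = Q \text{ on } \partial B_R,\ w \ge 0 \text{ on } \mathcal{S} \cap B_R\}$; since $Q \ge 0$ on $\mathcal{S}$ (it is itself a thin obstacle solution), this class is non-empty and $u_R$ solves \eqref{thin} in $B_R$. A second application of Lemma \ref{barrier} with $\tau$ replaced by a large \emph{negative} value, and the coefficients $(\alpha_l')$ resolved from the same triangular system, produces a lower barrier $\tilde Q$ which is itself a thin obstacle solution in $\mathbb{R}^2$ and satisfies $\tilde Q \le p$ on $\{r \ge M\}$. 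Since $\tilde Q \le p \le Q$ on $\partial B_R$, pairwise comparison of the three thin obstacle solutions yields $\tilde Q \le u_R \le Q$ in $B_R$; and because the residual $w_{-\frac{1}{2}}$ terms isolated in the barrier construction are bounded on $\{r \ge M\}$, this gives $|u_R - p| \le C$ uniformly in $R$. The optimal $C^{1,1/2}$ regularity for thin obstacle solutions then provides local compactness, and a diagonal subsequence produces $u_R \to u$ locally uniformly with $\sup |u - p| \le M$.

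Localization of the contact set is a direct replay of the uniqueness proof: for $(x_0, 0)$ with $x_0 \le -M$ and $M$ universally large, the paraboloid $\varphi(x) = (|x_1 - x_0|^2 - |x_2|^2)/d^3$ is a global solution of \eqref{thin}, and the estimate $\partial_{x_2} p \le -c|x_0|^{2k - \frac{1}{2}}$ carried out in Proposition \ref{uniqueness} gives $\varphi \ge p + C \ge u$ on the boundary of a small square around $(x_0, 0)$. The comparison principle then forces $u(x_0, 0) \le \varphi(x_0, 0) = 0$, and combined with the sign condition this yields $u = 0$ on $\widetilde{\{r > M\}}$. On the opposite ray $\{x_1 > M,\ x_2 = 0\}$ the datum $p$ is large positive and $|u - p|$ is bounded, so $u > 0$ there, and \eqref{thin} then gives $\Delta u = 0$ on $\widehat{\{r > M\}}$.

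For the expansion, note that $\cos((k - \tfrac{1}{2})\pi) = 0$ for every integer $k$, so $p$ vanishes on $\mathcal{S}$ and is harmonic in $\widehat{\mathbb{R}^2}$; hence $u - p$ is a bounded harmonic function on $\widehat{\{r > M\}}$ vanishing on $\widetilde{\{r > M\}}$. Its 2D Kelvin transform $w(x) := (u - p)(M^2 x / |x|^2)$ is then a bounded solution of \eqref{HarmonicInSlit} on $\widehat{B_M}$, and a rescaled application of Theorem \ref{TaylorExpansion} produces coefficients $c_m$ with $|c_m| \le C$ such that
\[
\biggl| w - \sum_{m = 0}^{N - 1} c_m u_{m + \frac{1}{2}} \biggr|(x) \le C |x|^{N} u_{\frac{1}{2}}(x).
\]
Under the inverse Kelvin transform each $u_{m + \frac{1}{2}}(M^2 x / |x|^2)$ becomes $M^{2m + 1} u_{-m - \frac{1}{2}}(x)$ and the error becomes a universal constant multiple of $|x|^{-N} u_{-\frac{1}{2}}(x)$, so setting $b_j := M^{2j - 1} c_{j - 1}$ produces the claimed expansion on $\{r > M\}$; on $\{r \le M\}$ the inequality holds trivially since the right-hand side blows up faster at the origin than any term on the left. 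The main obstacle is to verify that the sign-reversed barrier $\tilde Q$ still satisfies the admissibility condition at the end of Lemma \ref{barrier}; once that is in place, the compactness argument, the paraboloid localization, and the Kelvin-transform expansion all follow familiar patterns.
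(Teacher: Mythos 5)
Your construction of the approximating family is circular and produces the wrong object. You take $u_R$ to be the minimizer in $B_R$ with boundary data $Q$, but $Q$ is itself a global solution of \eqref{thin}, so by uniqueness of the constrained minimizer with prescribed boundary values you get $u_R\equiv Q$ for every $R$, and the limit is just $Q$ rather than a new solution. Moreover $Q$ does not solve \eqref{2D}: the triangular system in Lemma \ref{barrier} only matches the coefficients of $w_{2k-\frac32},\dots,w_{\frac32}$ (indices $i=1,\dots,2k-2$), so the leading mismatch between $Q=U_{-\tau}q$ and $p$ is a positive multiple of $w_{\frac12}$, which grows like $r^{1/2}$. Hence $Q-p$ (and, by the same computation, $p-\tilde Q$) is unbounded, contrary to your claim that ``the residual $w_{-\frac12}$ terms are bounded,'' and the sandwich $\tilde Q\le u_R\le Q$ cannot by itself give $|u_R-p|\le C$ even after the boundary data is corrected.

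The paper instead prescribes $u_n=p$ on $\partial B_n$, and this choice does real work. Since $p$ is harmonic in $\widehat{B_n}$ and vanishes on $\mathcal S$, while $u_n\ge 0$ on $\{x_2=0\}$, $\Delta u_n\le 0$, and $u_n=p$ on $\partial B_n$, the minimum principle applied to $u_n-p$ in $\widehat{B_n}$ gives $u_n\ge p$ directly, so no lower barrier $\tilde Q$ is needed; comparison of thin obstacle solutions gives $u_n\le Q$. The sandwich then localizes the contact set, and the key remaining step is that $u_n-p$ vanishes on $\partial B_n$: the maximum principle on the bounded slit annulus $\widehat{\{M<r<n\}}$ then gives $0\le u_n-p\le \sup_{\{r=M\}}(Q-p)\le C$ uniformly in $n$, which is exactly the bound your version of the sandwich loses. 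Two further points: the thin obstacle constraint is $u\ge 0$ on all of $\{x_2=0\}$, not only on $\mathcal S$; and the expansion inequality does not hold ``trivially'' on $\{r\le M\}$, because $M|x|^{-N}u_{-\frac12}$ vanishes on $\mathcal S\cap B_M$ while the left-hand side there equals $u$, which need not vanish on $\widetilde{B_M}$ --- the estimate is really obtained on $\{|x|\ge 2M\}$ by applying Theorem \ref{TaylorExpansion} on $B_{M/2}$ and inverting the Kelvin transform. The rest of your outline (paraboloid localization, Kelvin-transform expansion, reliance on Proposition \ref{uniqueness} for uniqueness) is consistent with the paper once the boundary data is fixed.
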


\begin{proof}
For large $n\in\mathbb{N}$, let $u_n$ be the solution to the thin obstacle problem \eqref{thin} in $B_n$ with $u_n=p$ along $\partial B_n$. By the maximum principle, we have $u_n\geq p\text{ in }B_n$. Also, since $Q$ and $u_n$ are solutions to the thin obstacle problem in $B_n$, we have
$$
\Delta(Q-u_n)^{-}\geq 0\text{ in }B_n\text{ and } (Q-u_n)^{-}=0\text{ on }\partial B_n,
$$
which implies that $(Q-u_n)^-\leq 0\text{ in }B_n$ i.e. $u_n\leq Q\text{ in }B_n$. Thus we have
\begin{equation}\label{B2} 
p\leq u_n\leq Q \text{ in }B_n
\end{equation} 
if $n$ is large. This gives us a locally uniformly bounded family $\{u_n\}$. Then up to subsequence, it converges to some $u_\infty$ locally uniformly on $\mathbb{R}^2$, which is a solution to the thin obstacle problem in $\mathbb{R}^2$.
	
From \eqref{B2}, we have  $u_n=0 \text{ in }B_n\cap\{x_1\leq -M, x_2=0\}$ and $u_n\geq 1 \text{ in }B_n\cap\{x_1\geq M, x_2=0\}$ for a universal $M>0$. Thus we have 
$$
\Delta u_\infty=0 \text{ in }\widehat{\{r>M\}}; \text{ and }u_\infty=0 \text{ in  }\widetilde{\{r>M\}}.
$$
On $\{r=M\}$, we have $0\leq u_\infty-p\leq Q-p\leq C$. Thus the maximum principle, applied to the domain $\{r>M\}$, gives
$$
|u_\infty-p|\leq C
$$ for a universal constant $C$.
In particular, $u_\infty$ is the unique solution to \eqref{2D}, according to Proposition \ref{uniqueness}. For simplicity, we then denote $u_{\infty}$ by $u$.

Next we deduce a refined expansion of the solution $u$. Let $w(x):=(u-p)(\frac{M^2x}{|x|^2})$ be the Kelvin transform of $u-p$ with respect to $\partial B_M$. Results from previous steps imply that $w$ is a harmonic function in the slit domain $\widehat{B_M}$. Applying Theorem \ref{TaylorExpansion} with $N-1$, we get universally  bounded $b_j,\ j=1,...,N,$ such that 
$$
\biggl|w-\sum_{j=1}^{N}b_j u_{j-\frac{1}{2}}\biggr|\leq C|x|^{N}u_{\frac{1}{2}} \text{ in }B_M.
$$
We immediately have
$$
\biggl|w-\sum_{j=1}^{N}b_j u_{\frac{1}{2}-j}\biggr|\leq C|x|^{-N}u_{-\frac{1}{2}} \text{ in }\mathbb{R}^2.
$$
by inverting the Kelvin transform and the proof is completed.
\end{proof}

As a corollary, for the solution from the previous proposition, we show that its Fourier coefficients along big circles vanish:
\begin{corollary}\label{fourier}
For the solution $u$ obtained by Proposition \ref{existence},  we have
$$\int_{\partial B_R}\biggl[u-\biggl(p+\sum_{j=1}^{N}b_ju_{\frac{1}{2}-j}\biggr)\biggr]\cdot\cos((i-\frac{1}{2})\theta)=0$$ 
for all $1\leq i\leq N\text{ and }R\geq M.$
\end{corollary}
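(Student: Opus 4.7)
The plan is to set $v := u - \bigl(p + \sum_{j=1}^{N} b_j u_{\frac{1}{2}-j}\bigr)$ so that the claim becomes $\int_{\partial B_R} v \cdot \cos((i-\frac{1}{2})\theta)\, d\sigma = 0$ for $1\le i\le N$. From Proposition \ref{existence} and the fact that each $u_{k-\frac{1}{2}}$ is harmonic in the slit domain and vanishes on $\mathcal{S}$, I would first verify that $v$ is harmonic in $\widehat{\{r>M\}}$, even in $x_2$, identically zero on $\widetilde{\{r>M\}}$, and enjoys the pointwise decay
$$|v(x)|\leq M|x|^{-N}u_{-\frac{1}{2}}(x) \leq C|x|^{-N-\frac{1}{2}}.$$

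Next I would expand $v$ along each circle $\partial B_R$ with $R>M$ using the Fourier basis dictated by the slit boundary condition. Even symmetry in $\theta$ and the vanishing condition at $\theta=\pm\pi$ single out the half-integer cosines, giving
$$v(R,\theta)=\sum_{m=0}^{\infty}a_m(R)\cos\bigl((m+\tfrac{1}{2})\theta\bigr).$$
Substituting into $\Delta v=0$ in polar coordinates yields the Euler ODE $a_m''+R^{-1}a_m'-R^{-2}(m+\frac{1}{2})^2 a_m=0$, whose general solution takes the separated form
$$a_m(R)=\alpha_m R^{m+\frac{1}{2}}+\beta_m R^{-(m+\frac{1}{2})}.$$

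The core step is to combine this with the decay bound. Testing $v(R,\theta)$ against $\cos((m+\frac{1}{2})\theta)/\pi$ and using $|v(R,\theta)|\le CR^{-N-\frac{1}{2}}$ yields $|a_m(R)|\le CR^{-N-\frac{1}{2}}$ for each $m$, uniformly in $R>M$. Sending $R\to\infty$ in the separated form then kills the growing mode ($\alpha_m=0$, otherwise $\alpha_m R^{m+\frac{1}{2}}$ would dominate the decay), and the leftover inequality $|\beta_m|\le CR^{m-N}$ forces $\beta_m=0$ as soon as $m<N$. Consequently $a_m(R)\equiv 0$ for $m=0,1,\ldots,N-1$, which upon setting $m=i-1$ is exactly the vanishing of the Fourier coefficient against $\cos((i-\frac{1}{2})\theta)$ for $1\le i\le N$. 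There is no real obstacle here once Proposition \ref{existence} is in hand: the argument is a standard separation-of-variables together with a decay-matching argument, and the only points requiring care are identifying the correct half-integer Fourier basis adapted to the slit and checking that the decay rate $|x|^{-N-\frac{1}{2}}$ is sharp enough to annihilate both the growing and decaying eigenmodes precisely up to the critical index $N-1$.
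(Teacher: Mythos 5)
Your argument is correct, and it is a re-packaging of the same separation-of-variables principle that the paper uses, but the implementation is genuinely different. You decompose $w := u - p_{ext}$ on each circle $\partial B_R$ into the half-integer cosine basis adapted to the slit and solve the resulting Euler ODE $a_m'' + r^{-1}a_m' - r^{-2}(m+\tfrac12)^2 a_m = 0$ explicitly, then kill both modes $r^{\pm(m+\frac12)}$ for $m\le N-1$ by letting $R\to\infty$ in the decay bound. The paper instead fixes $R$ and applies Green's identity on $B_L\setminus B_R$ to $w$ paired with the explicit slit-harmonic test function $(r^{l-\frac12}-R^{2l-1}r^{-(l-\frac12)})\cos((l-\tfrac12)\theta)$, which vanishes on $\partial B_R$, and then sends $L\to\infty$; this is precisely the Wronskian identity for the two solutions of your ODE, so the two proofs are dual presentations of one computation. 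Each buys something small: the paper's version works with a single application of Green's identity and never needs to establish that $a_m(R)$ is $C^2$ in $R$, while your version requires only the $L^\infty$ decay on $\partial B_L$ and not a separate estimate on $|\nabla w|$ there. The one step you should make explicit in a write-up is the justification that $a_m(R)$ actually satisfies the Euler ODE classically: differentiating $\int_{-\pi}^{\pi} w(R,\theta)\cos((m+\tfrac12)\theta)\,d\theta$ under the integral sign and integrating by parts twice in $\theta$ is legitimate despite $\partial_\theta w$ and $\partial_r w$ having $|\theta\mp\pi|^{-1/2}$-type singularities at the slit, because the half-integer cosine and its derivative vanish fast enough there to kill the boundary terms and keep the integrands absolutely integrable; the paper's Green's identity faces an analogous issue across the slit, resolved because both $w$ and the test function vanish on $\mathcal S$ with rate $d^{1/2}$.
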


\begin{proof}
	For simplicity, we denote the extended data by
	$$p_{ext}:=\biggl(p+\sum_{j=1}^{N}b_ju_{\frac{1}{2}-j}\biggr).$$
	From Proposition \ref{existence}, we have
	$
	\Delta(u-p_{ext})=0 \text{ in } \widehat{\{r>M\}}, \text{ and } u-p_{ext}=0 \text{ on } \widetilde{\{r>M\}}.
	$ For $R>M$ and each $i$ and $1\leq l\leq i$ fixed, define $v:=(r^{\frac{2l-1}{2}}-R^{2l-1}r^{-\frac{2l-1}{2}})\cos(\frac{2l-1}{2}\theta).$ Then 
	$v$ is a harmonic function in the slit domain $\mathbb{R}^2$. For any $L>R$, by integration by parts, we have
	$$
	0=\int_{B_L\setminus B_R}(u-p_{ext})\cdot\Delta v-\Delta(u-p_{ext})\cdot v=\int_{\partial(B_L\setminus B_R)}(u-p_{ext})_\nu\cdot v-(u-p_{ext})\cdot v_\nu.
	$$
	
	The asymptotic expansion of $u-p_{ext}$ yields that $|u-p_{ext}|=O(L^{-N-\frac{1}{2}})$, $|(u-p_{ext})_\nu|=O(L^{-N-\frac{3}{2}})$, $|v|=O(L^{\frac{2l-1}{2}})$ and $|v_\nu|=O(L^{\frac{2l-3}{2}})$ on $\partial B_L$ as $L\to\infty$. As a result,
	$$
	\int_{\partial B_L}(u-p_{ext})_\nu\cdot v-(u-p_{ext})\cdot v_\nu=O(L^{-N+l-1}).
	$$
	On $\partial B_R$, we have $v=0$ and $v_\nu=-(2l-1)R^{\frac{2l-3}{2}}\cos(\frac{2l-1}{2}\theta)$. Thus
	$$
	\int_{\partial B_R}(u-p_{ext})\cdot\cos(\frac{2l-1}{2}\theta)=O(R^{\frac{3-2l}{2}}L^{-N+l-1}).
	$$
	
	Sending $L\to\infty$ and then we finish our proof. 
\end{proof} 

Combining all of above, we complete the proof of Theorem \ref{start}

\section{Characterization of Symmetric Solutions}

In this section, we give the proof of Theorem \ref{SymSolIn2D} to characterize symmetric solutions and also show its perturbation version. We will first construct two important polynomials. Then we will use these auxiliary polynomials to show that $u$ and $v$ are actually half-space solutions to 2D thin obstacle problems with data $p$ and $q$ at infinity respectively. In the end, by applying Theorem \ref{TaylorExpansion}, we can fully characterize $u$ and $v$. We begin with the construction of two auxiliary polynomials as in Savin-Yu \cite{savin2021halfspace}.

\begin{lemma}\label{polynomials}
With the same assumption as in Theorem \ref{SymSolIn2D}, 
$$
P(t):=\operatorname{Re}(\partial_{x_1}u-i\partial_{x_2}u)^2(t,0)\text{ , } Q(t):=\operatorname{Re}(\partial_{x_1}v-i\partial_{x_2}v)^2(t,0)
$$
are two polynomials with degree $4k-3$. Moreover, we have
\begin{equation}\label{SymmetricPolynomial}
P(t)=-Q(-t).
\end{equation}
\end{lemma}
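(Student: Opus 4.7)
The plan is to realize $P$ and $Q$ as restrictions to $\mathbb{R}$ of entire polynomials, and derive $P(t) = -Q(-t)$ from the polynomial identity $F_v(z) = -F_u(-z)$, where $F_u(z) := (\partial_{x_1} u - i \partial_{x_2} u)^2$ and analogously $F_v$, viewed as functions of $z = x_1 + ix_2$.

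\emph{Step 1 (entire polynomial structure).} On the upper half plane $\{x_2 > 0\}$, $u$ is harmonic, so $f_u := \partial_{x_1} u - i\partial_{x_2}u$ is holomorphic by Cauchy--Riemann, hence so is $F_u = f_u^2$; the $C^{1,1/2}$ regularity of $u$ extends $F_u$ continuously to $\{x_2 \geq 0\}$. At each real point $(t,0)$, either $\partial_{x_1} u(t,0) = 0$ (when $t$ lies in the contact set, since $u \equiv 0$ there, so the tangential derivative vanishes) or $\partial_{x_2} u(t,0) = 0$ (otherwise, by even reflection of $u$ across $\{x_2 = 0\}$, around which $u$ is harmonic), so $F_u(t) \in \mathbb{R}$ for all $t \in \mathbb{R}$. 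Schwarz reflection then extends $F_u$ to an entire function. The asymptotic $u = p + O(|x|^{-1/2})$ from Theorem \ref{start}, together with standard interior gradient estimates applied far from the contact set, gives $|F_u(z)| = O(|z|^{4k-3})$, so by Liouville $F_u$ is a polynomial of degree at most $4k-3$ with real coefficients. Its leading term, computed from $u_{2k - \frac{1}{2}} = \operatorname{Re}(z^{2k - \frac{1}{2}})$, is $(2k - \frac{1}{2})^2 z^{4k-3}$, forcing $\deg F_u = 4k-3$; since $F_u$ is real on $\mathbb{R}$, $P(t) = F_u(t)$, and analogously $Q(t) = F_v(t)$.

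\emph{Step 2 (the symmetry identity).} Write $u = \operatorname{Re} H$ on the slit domain with the formal asymptotic $H(z) = \sum_m \beta^u_m z^m$ indexed by half-integers $m \leq 2k - \frac{1}{2}$, where $\beta^u_{2k - \frac{1}{2}} = 1$, $\beta^u_{2k - \frac{1}{2} - l} = a_l$ for $1 \leq l \leq 2k - 1$, and $\beta^u_{\frac{1}{2} - j} = b_j$ for $j \geq 1$. Then $F_u(z) = \sum_M A_M z^M$ with $A_M = \sum_{n_1 + n_2 = M} e^u_{n_1} e^u_{n_2}$ and $e^u_n := (n+1)\beta^u_{n+1}$. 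The conjugacy $p \leftrightarrow q$ combined with the anti-symmetry hypothesis $b_i[v] = (-1)^{i+1} b_i[u]$ for $1 \leq i \leq 2k - 2$ yields the uniform relation $\beta^v_m = (-1)^{2k - \frac{1}{2} - m} \beta^u_m$ for every half-integer $m \in [\tfrac{5}{2} - 2k,\, 2k - \tfrac{1}{2}]$. A quick index check shows that for each coefficient $A_M$ with $0 \leq M \leq 4k-3$, any contributing pair $(n_1, n_2)$ with $n_i \leq 2k - \frac{3}{2}$ satisfies $n_i \geq \frac{3}{2} - 2k$, so $n_i + 1$ falls inside the anti-symmetry range. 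Summing the exponents yields
$$A^v_M = (-1)^{(2k - \frac{3}{2} - n_1) + (2k - \frac{3}{2} - n_2)} A_M = (-1)^{4k - 3 - M} A_M = (-1)^{M+1} A_M,$$
so $F_v(z) = -F_u(-z)$ and hence $P(t) = -Q(-t)$.

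The main obstacle is the index check in Step 2: one must verify that the cutoff $2k-2$ in the anti-symmetry hypothesis covers exactly the range of exponents that contribute to the nonnegative-degree coefficients of $F_u$. This matching is the reason the hypothesis is stated with that specific bound, and it is what makes the sign cancellation uniform across all coefficients.
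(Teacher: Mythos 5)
Your proof is correct and follows essentially the same route as the paper: realize $(\partial_{x_1}u-i\partial_{x_2}u)^2$ as an entire polynomial via its behavior across $\{x_2=0\}$, bound the degree from the asymptotics at infinity, and then match coefficients using the conjugacy of $p,q$ together with the anti-symmetry hypothesis on the $b_j$. Two small remarks: your use of Schwarz reflection (imaginary part $-2\partial_{x_1}u\,\partial_{x_2}u$ vanishing on the real line) is actually a cleaner justification of entirety than the paper's terse claim that a continuous function harmonic off a line is harmonic; and your Step 2 spells out, coefficient by coefficient, exactly the ``direct computation'' the paper leaves implicit, including the pleasant index check showing that the hypothesis bound $i\le 2k-2$ on the $b_i$ is precisely what is needed for all pairs $(n_1,n_2)$ with $n_1+n_2\geq 0$ to land in the anti-symmetry window $[\tfrac{5}{2}-2k,\,2k-\tfrac{1}{2}]$.
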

\begin{proof}
Recall that for simplicity, we define $b_j=b_j[a_1,a_2,...,a_{2k+1}]$ for $j=1,...,2k-2$, and
$$p_{ext}:=p+\sum_{i=1}^{2k-2}b_ju_{\frac{1}{2}-j}, \text{ and }
q_{ext}:=q+\sum_{i=1}^{2k-2}(-1)^jb_ju_{\frac{1}{2}-j}.
$$ 
Proposition \ref{existence} shows that 
$$|u-p_{ext}|+|v-q_{ext}|\leq 2M|x|^{-2k-\frac{1}{2}} \text{ in $\mathbb{R}^2$.}$$
Then it follows that 
\begin{equation}\label{GradientDecay}
	|\nabla u-\nabla p_{ext}|+|\nabla v-\nabla q_{ext}|\le C|x|^{-2k-\frac{3}{2}} \text{ for }|x|\geq 1.
\end{equation}

Note that $u$ is an entire solution to the thin obstacle problem of order $O(|x|^{2k-\frac{1}{2}})$ at infinity. Also note that the real part $(\partial_{x_1}u)^2-(\partial_{x_2}u)^2$ and the imaginary part $-2\partial_{x_1}u\partial_{x_2}u$ of $(\partial_{x_1}u-i\partial_{x_2}u)^2$ are both continuous in $\mathbb{R}^2$ and harmonic outside $\{x_2=0\}$. Thus they are harmonic functions in $\mathbb{R}^2$ and so is $(\partial_{x_1}u-i\partial_{x_2}u)^2$. Then by Liouville theorem, $(\partial_{x_1}u-i\partial_{x_2}u)^2$ is a polynomial of degree $4k-3$. By direct computation, we have
$$
(\partial_{x_1}p_{ext}-i\partial_{x_2}p_{ext})^2=\mathcal{P}(x_1+ix_2)+\sum_{j=1}^{4k-3}\mathcal{R}_j(x_1+ix_2),
$$
where $\mathcal{P}$ is a complex polynomial of degree $4k-3$, and each $\mathcal{R}_j$ is a $(-j)$-homogeneous rational function for $j=1,2,\dots,4k-3$.

With \eqref{GradientDecay}, it follows that  
$$
(\partial_{x_1}u-i\partial_{x_2}u)^2=\mathcal{P} \text{ in }\mathbb{R}^2.
$$ 
If we define 
$$
P(t):=\operatorname{Re}(\partial_{x_1}u-i\partial_{x_2}u)^2(t,0)=[(\partial_{x_1}u)^2-(\partial_{x_2}u)^2](t,0)=\operatorname{Re}\mathcal{P}(t),
$$
then $P$ is a real polynomial. Similarly, corresponding to $v$ and $q_{ext}$, we have 
$$
(\partial_{x_1}q_{ext}-i\partial_{x_2}q_{ext})^2=\mathcal{Q}(x_1+ix_2)+\sum_{j=1}^{4k-3}\mathcal{S}_j(x_1+ix_2),
$$
where $\mathcal{Q}$ is also a complex polynomial of degree $4k-3$, and $\mathcal{S}_j$ is a $(-j)$-homogeneous rational function for $j=1,2,\dots,4k-3$. Moreover, we have
$$Q(t):=\operatorname{Re}(\partial_{x_1}v-i\partial_{x_2}v)^2(t,0)=[(\partial_{x_1}v)^2-(\partial_{x_2}v)^2](t,0)=\operatorname{Re}\mathcal{Q}(t),
$$ 
which is also a real polynomial. With the symmetry condition $b_i[a_1,a_2,...,a_{2k+1}]=(-1)^i b_i[-a_1,a_2,...,-a_{2k+1}]$, it just follows from direct computation that 
\begin{equation}
	P(t)=-Q(-t).
\end{equation}
\end{proof}

With two auxiliary polynomials, we can show that $u$ and $v$ are in fact half-space solutions to \ref{2D}. Its proof is the key difference between our method and Savin-Yu's method in \cite{savin2021halfspace}.
\begin{proposition}\label{half-space}
With the same assumption as Theorem \ref{SymSolIn2D}, we have $u$ and $v$ are half-space solutions to \eqref{2D}.
\end{proposition}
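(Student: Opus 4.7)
I will argue by contradiction, in the spirit of Savin-Yu's analysis for the $k=2$ case \cite{savin2021halfspace}, but replace their exhaustive zero count by an odevity observation on a handful of specific zeros. The first step is to read off the sign and multiplicity structure of $P$ on the real axis. On the non-contact part of the axis, evenness of $u$ in $x_2$ gives $\partial_{x_2}u(t,0+)=0$, so $P(t)=(\partial_{x_1}u(t,0))^2\ge 0$; on the interior of the contact set, $\partial_{x_1}u(t,0)=0$, so $P(t)=-(\partial_{x_2}u(t,0+))^2\le 0$. At each free boundary point $t_\star$ of frequency $2k_\star-\tfrac12$, a blow-up computation shows $P$ has an odd-order zero of multiplicity $4k_\star-3$, and these are the only sign changes of $P$. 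Combined with $P(+\infty)>0$ and $P(-\infty)<0$ (from the positive leading coefficient $(2k-\tfrac12)^2$), the number $N$ of free boundary points of $u$ is odd. The symmetry $P(t)=-Q(-t)$ then gives $\mathrm{FB}(v)=-\mathrm{FB}(u)$ with matching frequencies, and in the alternating contact/non-contact pattern, bounded contact intervals of $u$ are exactly the images of bounded non-contact intervals of $v$ under $t\mapsto -t$.

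Assume now for contradiction that $u$ is not a half-space solution, so $N\ge 3$. Applying Rolle to $v(\cdot,0)$ in a bounded non-contact interval of $v$ yields an interior zero $\eta$ of $\partial_{x_1}v$ at which $\partial_{x_2}v(\eta,0+)$ also vanishes by evenness, giving $Q(\eta)=0$ with even multiplicity $\ge 2$; reflection produces a zero $-\eta$ of $P$ in a bounded contact interval of $u$, of the same multiplicity. Symmetrically, Rolle applied to $u(\cdot,0)$ in each bounded non-contact interval of $u$ produces a zero of $P$ there of even multiplicity $\ge 2$.

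The key new input is the odevity step, which boosts both of these multiplicities from $\ge 2$ to $\ge 4$. The function $f'(z):=\partial_{x_1}u-i\partial_{x_2}u$ is holomorphic in the upper half-plane near $-\eta$ and satisfies $(f')^2=\mathcal{P}$, where $\mathcal{P}$ is the real-coefficient complex polynomial whose restriction to $\mathbb{R}$ equals $P$. Writing $\mathcal{P}(z)=(z+\eta)^m H(z)$ with $H(-\eta)\ne 0$, single-valuedness of $f'=\pm(z+\eta)^{m/2}\sqrt{H(z)}$ near $-\eta$ forces $m$ to be even. Moreover, on the contact side of the axis, $f'(t,0+)=-i\partial_{x_2}u(t,0+)$ is purely imaginary with non-negative imaginary part (since $\partial_{x_2}u\le 0$ there), while $H(-\eta)<0$ (from $P\le 0$ near $-\eta$) makes $\sqrt{H(t)}$ purely imaginary locally; matching signs forces $(t+\eta)^{m/2}$ to have constant sign across $t=-\eta$, i.e., $m/2$ is even, so $m$ is divisible by $4$ and $m\ge 4$. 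The same argument applied to $v$ at the mirror of each Rolle zero of $u$ gives multiplicity $\ge 4$ for the Rolle-type zeros of $P$ in bounded non-contact intervals as well. The resulting count is $N+4(N-1)\le \deg P = 4k-3$, i.e., $5N\le 4k+1$. The main obstacle I anticipate is closing the residual gap for large $k$: I plan to exploit the restricted set of admissible multiplicities (odd of the form $4k_\star-3$ at free boundaries, multiples of $4$ at Rolle-type zeros), together with the anti-symmetry hypothesis on the $b_i$'s, to rule out the remaining configurations and conclude $N=1$.
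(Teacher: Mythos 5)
Your proposal assembles the two local parity observations that drive the paper's proof, but it combines them in a way that does not close, and you acknowledge this yourself. The two observations you make are correct: at a zero of $\partial_{x_1}u$ in the interior of a bounded non-contact interval of $u$, the multiplicity of $P$ is even; and at the mirror point, interior to the contact set of $v$, the $f'$-argument (equivalently, $\partial_{x_2}v\le 0$ analytic forcing even vanishing order) gives a multiplicity for $Q$ that is $\equiv 0\pmod 4$. What you do not exploit is that, for the right choice of zero, these two facts are already in direct conflict. Since $u>0$ somewhere in the bounded non-contact interval $(a,b)$ and $u(a)=u(b)=0$, there is a point $d\in(a,b)$ at which $\partial_{x_1}u$ changes sign from $+$ to $-$, hence vanishes to odd order $\lambda$, so $P$ has a zero of multiplicity $2\lambda\equiv 2\pmod 4$ at $d$. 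By $P(t)=-Q(-t)$ this equals the multiplicity of $Q$ at $-d$; but your own argument applied to $v$ at $-d$ forces that multiplicity to be $\equiv 0\pmod 4$. Contradiction, with no counting at all. This is exactly the paper's proof.

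Instead, you flatten both observations into a lower bound ``$\ge 4$'' on multiplicities and run a global count $5N\le 4k+1$, which is precisely the degree-counting that the paper's introduction explains cannot be completed for general $k$ (one cannot account for all zeros of $P$), and you correctly flag this as the main unresolved obstacle. Note also that your assertion that every Rolle-type zero of $P$ in a bounded non-contact interval of $u$ has multiplicity $\equiv 0\pmod 4$ is actually false for the sign-changing critical point $d$ above, whose $u$-side multiplicity is $\equiv 2\pmod 4$; the incompatibility you lose by rounding down to ``$\ge 4$'' is the very contradiction you need. The fix is therefore not to tighten the count but to abandon it: choose $d$ specifically to be a sign-changing critical point of $\partial_{x_1}u$, compare the residue class mod $4$ of $\operatorname{ord}_d P$ computed from $u$'s side and from $v$'s side, and conclude immediately. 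You had both parity computations in hand; the missing idea was to make them collide at a single well-chosen point rather than to sum them.
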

\begin{proof}
With \eqref{SymmetricPolynomial}, we  show that up to a translation,  $u$ must be a half-space solution. Since $u=0$ on $\widetilde{\{r>M\}}$ according to Proposition \ref{start}, it suffices to show that $\text{spt}(\Delta u)$ has only one component.

Suppose, on the contrary, that 
$$
(-\infty,a]\cup [b,+\infty)\supset \text{spt}(\Delta u)\supset(-\infty,a]\cup [b,c] \text{ with }b>a,
$$
Here spt($\Delta u$) is the support of $\Delta u$ as a measure. For simplicity, we omit the second coordinate component since it is always $0$ in the proof. Note that the second component has to terminate in finite length since $\Delta u=0$ in $\widehat{\{r>M\}}$.

In $(-\infty,a]\cup [b,c]$, we have $\partial_{x_1}u=0$. Thus $P(t)=-(\partial_{x_2}u)^2\le 0$ for $t\in(-\infty,a]\cup[b,c]$. On the other hand, in $(a,b)$, $\partial_{x_2}u=0$ and $P(t)=(\partial_{x_1}u)^2\geq 0$. Moreover, since $u(a)=u(b)=0$, $u\geq0$ in $(a,b)$. we must have $\partial_{x_1}u(d)=0$ at some point $d\in(a,b)$ and there exists $\epsilon>0$ such that $\partial_{x_1}u>0\text{ on }(d-\epsilon,d)$ and $\partial_{x_1}u<0\text{ on }(d,d+\epsilon)$. Note that $u$ only has at most finitely many zeros on $(a,b)$. Otherwise, there are infinitely many zeros of $\partial_{x_1}u$ and thus of $P(t)$, which is impossible. So such $d$ and $\epsilon$ exist. Thus $d$ is a zero of $\partial_{x_1}u$ of odd multiplicity $\lambda$. So $d$ is a root of $P$ of multiplicity $2\lambda$.

With the symmetry described in \eqref{SymmetricPolynomial}, we have $Q<0$ on $(-b,-a)$ except finitely many zeros. This implies $v=0$ on $(-b,-a)$ except finitely many	points. By the continuity of $v$, we conclude that $v$ vanishes identically on $(-b,-a)$. So $\partial_{x_1}v=0\text{ on }(-b,-a)$ and $Q=-(\partial_{x_2}v)^2$.

Using the symmetry \eqref{SymmetricPolynomial} again, we have $-d$ is a zero of $Q$ and $\partial_{x_2}v$. Since $\partial_{x_2}v$ stays non-positive on $(-b,-a)$, $-d$ is a zero of $\partial_{x_2}v$ of even multiplicity $\mu$ and thus a zero of $Q$ of multiplicity $2\mu$, which is a contradiction with the symmetry.

As a result, $\text{spt}(\Delta u)$ must be a half line. A similar result holds for $\text{spt}(\Delta v).$  With \eqref{SymmetricPolynomial}, we see that if $\text{spt}(\Delta u)=(-\infty,a]$, then $\text{spt}(\Delta v)=(-\infty, -a].$
\end{proof}
\begin{remark}
We should point out that if we count the number of zeros of $P(t)$ and $Q(t)$, we can only prove the case of $p$ with leading term $u_{\frac{7}{2}}$ as in Savin-Yu \cite{savin2021halfspace} and $u_{\frac{11}{2}}$ (with slight modifications). However, this method does not work for general case since we cannot find all zeros of $P(t)$ and $Q(t)$. Instead, we check the multiplicity of the zero $d$ of $P(t)$ to get a contradiction, which is the main advantage of our paper. It is independent of the leading term.
\end{remark}

Finally, we can use the Lemma \ref{polynomials} and Proposition \ref{half-space} above to prove Theorem \ref{SymSolIn2D}.
\begin{proof}[Proof of Theorem \ref{SymSolIn2D}]

With the help of Proposition \ref{half-space}, we can apply Theorem \ref{TaylorExpansion} to get
	$$U_{-a}u=a_0'u_{2k-\frac{1}{2}}+\sum_{l=1}^{2k-2}\alpha_lu_{2k-\frac{1}{2}-i}+a_{2k+1}'u_{\frac{1}{2}}.$$
	
	Since $\sup\limits_{\mathbb{R}^2}|u-p|<+\infty$, we have $a_5'=0$ . The bound of $\biggl|u-\biggl(u_{2k-\frac{1}{2}}+\sum\limits_{l=1}^{2k-2}a_lu_{2k-\frac{1}{2}-l}\biggr)\biggr|$ in $\mathbb{R}^2$ yields $a_0'=1.$ Therefore, 
	$$U_{-a}u=u_{2k-\frac{1}{2}}+\sum_{l=1}^{2k-2}\alpha_lu_{2k-\frac{1}{2}-l}.$$
	Similarly, we have $$U_{a}v=u_{2k-\frac{1}{2}}+\sum_{l=1}^{2k-2}\beta_lu_{2k-\frac{1}{2}-l}.$$ 
	
	Thus $u$ and $v$ are symmetric. Finally, we can apply \eqref{SymmetricPolynomial} and obtain that $\alpha_l=(-1)^{l+1}\beta_l$. This finishes our proof of Theorem \ref{SymSolIn2D}.
\end{proof} 

With Theorem \ref{SymSolIn2D}, we obtain the following corollary, which allows us to perturb the symmetry condition 
$$
b_i[a_1,a_2,\dots,a_{2k-2},a_{2k-1}]=(-1)^{i+1}b_i[-a_1,a_2,\dots,a_{2k-2},-a_{2k-1}],
$$
for all  $1\leq i\leq 2k-2$, to almost-symmetric case. It is what we truly need when studying half-space $(2k-\frac{1}{2})$-homogeneous solutions. Its proof is almost the same as the Corollary B.2 of \cite{savin2021halfspace}.

\begin{corollary}\label{AlmostSymmetric2D}
	Given $p=u_{2k-\frac{1}{2}}+\sum\limits_{l=1}^{2k-1}a_lu_{2k-\frac{1}{2}-l}=u_{2k-\frac{1}{2}}+\sum\limits_{l=1}^{2k-1}\tilde{a}_lw_{2k-\frac{1}{2}-l}$ with $|a_j|\le 1$, we set 
	$$
	b_j^+:=b_j[a_1,a_2,\dots,a_{2k-2},a_{2k-1}]\text{, } b_j^-:=b_j[-a_1,a_2,\dots,a_{2k-2},-a_{2k-1}] $$ 
	for $j=1,2,...,2k-1,$ and
	$$
	p_{ext}:=p+\sum_{j=1}^{2k-2}b_ju_{\frac{1}{2}-j}=p+\sum_{j=1}^{2k-2}\tilde{b}_jw_{\frac{1}{2}-j}.
	$$ 
	
	Then there is a universal modulus of continuity $\omega$ such that 
	\begin{align*}
		&\sum_{l=1}^{2k-2}\biggl|\tilde{a}_l-\sum_{j=0}^{l}\frac{\alpha_{l-j}}{j!}\tau^j\biggr|+\biggl|\tilde{a}_{2k-1}-\sum_{j=1}^{2k-1}\frac{\alpha_{2k-1-j}}{j!}\tau^j\biggr|\\
		&+\sum_{j=1}^{2k-2}\biggl|\tilde{b}_{j}^{+}-\sum_{l=0}^{2k-2}\frac{\alpha_{2k-2-l}}{(l+j+1)!}\tau^{l+j+1}\biggr|\leq\omega(\sum_{j=1}^{2k-2}|b_j^+-b_j^-|)
	\end{align*} 
	for universally  bounded $\alpha_j$ (we naturally define $\alpha_0=1$) and $\tau$ such that $u_{2k-\frac{1}{2}}+\sum\limits_{l=1}^{2k-2}\alpha_lu_{2k-\frac{1}{2}-l}$ 
	is a solution to the thin obstacle problem in $\mathbb{R}^2$.
\end{corollary}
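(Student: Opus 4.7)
The plan is to argue by compactness and contradiction, reducing the almost-symmetric statement to the exactly-symmetric Theorem \ref{SymSolIn2D}. Suppose the corollary fails; then there exist $\epsilon_0>0$ and a sequence of admissible data $p^{(n)}=u_{2k-\frac12}+\sum_l a_l^{(n)}u_{2k-\frac12-l}$ with $|a_l^{(n)}|\le 1$ whose symmetry defect $\delta_n:=\sum_j|b_j^{+,(n)}-b_j^{-,(n)}|$ tends to $0$, while for every universally bounded choice of $(\alpha^{(n)},\tau^{(n)})$ making $u_{2k-\frac12}+\sum_l\alpha_l^{(n)}u_{2k-\frac12-l}$ a solution to the thin obstacle problem, the left-hand side of the claimed inequality stays above $\epsilon_0$.

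By compactness of $[-1,1]^{2k-1}$ I extract a subsequence along which $a_l^{(n)}\to a_l^{\infty}$, so $p^{(n)}\to p^{\infty}$ and $q^{(n)}\to q^{\infty}$ locally uniformly. Proposition \ref{existence} together with Lemma \ref{barrier} provides universal local $L^\infty$ bounds on the solutions $u^{(n)},v^{(n)}$ to \eqref{2D}, so a further subsequence converges locally uniformly to the unique solutions $u^{\infty},v^{\infty}$ with data $p^{\infty},q^{\infty}$ (using uniqueness from Proposition \ref{uniqueness} to identify the limits). The crucial continuity step is to pass the Fourier coefficients $b_j^{\pm,(n)}$ to the limit: combining the circle-integral representation of Corollary \ref{fourier} with the decay in Theorem \ref{start}, each $b_j^{\pm,(n)}$ is recovered from finitely many integrals of $u^{(n)}$ (resp.\ $v^{(n)}$) against $\cos((i-\tfrac12)\theta)$ on a \emph{fixed} large circle, yielding $b_j^{\pm,(n)}\to b_j^{\pm,\infty}$. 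The hypothesis $\delta_n\to 0$ then forces the symmetry relation needed for Theorem \ref{SymSolIn2D} at the limit, producing universally bounded $(\alpha_l^{\infty},\tau^{\infty})$ with $u^{\infty}=U_{\tau^{\infty}}\bigl(u_{2k-\frac12}+\sum_l\alpha_l^{\infty}u_{2k-\frac12-l}\bigr)$.

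Expanding this representation in the basis $\{w_{2k-\frac12-l}\}$ via Taylor's theorem and comparing coefficients with $p_{ext}^{\infty}=p^{\infty}+\sum_j b_j^{+,\infty}u_{\frac12-j}$ shows that each bracketed expression on the left-hand side of the corollary vanishes \emph{exactly} when $(\alpha,\tau)=(\alpha^{\infty},\tau^{\infty})$ and the data is $p^{\infty}$; this is a finite linear-algebra identity that one reads off by matching powers of $\tau^{\infty}$. Choosing $(\alpha^{(n)},\tau^{(n)}):=(\alpha^{\infty},\tau^{\infty})$ along the subsequence, each bracketed term is a continuous function of the finitely many parameters $(a_l^{(n)},b_j^{+,(n)})$, all of which converge to the limit values, so the entire left-hand side tends to $0$ and contradicts the lower bound $\epsilon_0$. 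The hard part is the continuity of the Fourier coefficients $b_j^+$ in the data $(a_l)$: representing them as integrals on a \emph{fixed} large circle via Corollary \ref{fourier} is precisely what removes any sensitivity to the (possibly moving) contact set of $u^{(n)}$, and once this continuity is secured the remainder of the argument is standard finite-dimensional compactness.
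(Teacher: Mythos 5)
Your argument is correct and follows essentially the same route as the paper's own proof: a compactness-and-contradiction scheme, passage to the limit using the barrier/existence bounds and uniqueness, continuity of the Fourier coefficients $b_j^{\pm}$ via the fixed-circle integrals of Corollary~\ref{fourier}, application of the exactly-symmetric Theorem~\ref{SymSolIn2D} to the limit data, and a finite-dimensional continuity argument contradicting the assumed lower bound.
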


\begin{proof}
	Suppose there is no such $\omega$, then we can find a sequence $(a_{l}^{n})$ such that for any  bounded $\alpha_j$ and $\tau$ such that $u_{2k-\frac{1}{2}}+\sum\limits_{l=1}^{2k-2}\alpha_lu_{2k-\frac{1}{2}-l}$ 
	is a solution to the thin obstacle problem in $\mathbb{R}^2$, we have
	\begin{align}\label{SecondB2}
		&\sum_{l=1}^{2k-2}\biggl|\tilde{a}^{n} _{l}-\sum_{j=0}^{l}\frac{\alpha_{l-j}}{j!}\tau^j\biggr|+\biggl|\tilde{a}^{n}_{2k-1}-\sum_{j=1}^{2k-1}\frac{\alpha_{2k-1-j}}{j!}\tau^j\biggr|\nonumber\\
		&+\sum_{j=1}^{2k-2}\biggl|\tilde{b}^{+,n}_{j}-\sum_{l=0}^{2k-2}\frac{\alpha_{2k-2-l}}{(l+j+1)!}\tau^{l+j+1}\biggr|\geq\epsilon>0
	\end{align}
	for some $\epsilon>0$. Meanwhile, the corresponding $(b_j^{\pm,n})$ satisfy
	\begin{equation}\label{ThirdB2}
		\sum_{j=1}^{2k-2}|b_j^{+,n}-b_j^{-,n}|\to 0,
	\end{equation}
	as $n\to \infty$. Then up to subsequence, we have 
	$$a_l^n\to a_l^\infty \text{ and }b_j^{\pm,n}\to b_j^{\pm,\infty}.$$
	
	Set
	$$p_{n}^{+}=u_{2k-\frac{1}{2}}+\sum\limits_{l=1}^{2k-1}a^{n}_{l}u_{2k-\frac{1}{2}-l}=u_{2k-\frac{1}{2}}+\sum\limits_{l=1}^{2k-1}\tilde{a}^{n}_{l}w_{2k-\frac{1}{2}-l}$$
	and let $u^+_n$ be the (unique) solution to \eqref{2D} with data $p^+_n$ at infinity. Then by Proposition \ref{existence}, we have
	$$|u^+_n-p^+_n|\le M \text{ in }\mathbb{R}^2.$$ 
	
	Up to subsequence, we have $u^+_n$ converge to $u^+_\infty$ locally uniformly for some solution $u^+_\infty$ to the thin obstacle problem in $\mathbb{R}^2$. Moreover, we have 
	$$\biggl|u^+_\infty-\biggl[u_{2k-\frac{1}{2}}+\sum\limits_{l=1}^{2k-1}a^{\infty}_{l}u_{2k-\frac{1}{2}-l}\biggr]\biggr|\le M \text{ in $\mathbb{R}^2$.}$$ 
	Thus $u^+_\infty$ is the solution to \eqref{2D} with data $p^+_\infty=u_{2k-\frac{1}{2}}+\sum\limits_{l=1}^{2k-1}a^{n}_{l}u_{2k-\frac{1}{2}-l}$ at infinity. 
	
	Corollary \ref{fourier} implies that $b_j^{+,\infty}:=b_j[a_l^\infty]=\lim\limits_{n\to\infty} b_j^{+,n}.$ Simliarly, for $p_n^-=u_{2k-\frac{1}{2}}+\sum\limits_{l=1}^{2k-1}(-1)^{l}a^{n}_{l}u_{2k-\frac{1}{2}-l}$, we also have  $b_j^{-,\infty}:=b_j[(-1)^la_{l}^{\infty}]=\lim\limits_{n\to\infty} b_j^{-,n}.$ Invoking \eqref{ThirdB2}, we conclude 
	$$
	b_j[a_1,a_2,\dots,a_{2k-2},a_{2k-1}]=(-1)^{j+1}b_j[-a_1,a_2,\dots,a_{2k-2},-a_{2k-1}]
	$$
	Then Proposition \ref{half-space} yields that
	$$u_\infty^+=U_\tau(u_{2k-\frac{1}{2}}+\sum\limits_{l=1}^{2k-1}\alpha_{l}u_{2k-\frac{1}{2}-l})$$
	for $\alpha_j$ such that $u_{2k-\frac{1}{2}}+\sum_{l=1}^{2k-2}\alpha_lu_{2k-\frac{1}{2}-l}$
	is a solution to the thin obstacle problem in $\mathbb{R}^2$.	As a consequence, we have 
	\begin{align*}
		&\sum_{l=1}^{2k-2}\biggl|\tilde{a}^{\infty}_{l}-\sum_{j=0}^{l}\frac{\alpha_{l-j}}{j!}\tau^i\biggr|+\biggl|\tilde{a}^{\infty}_{2k-1}-\sum_{j=1}^{2k-1}\frac{\alpha_{2k-1-j}}{j!}\tau^j\biggr|\\
		&+\sum_{j=1}^{2k-2}\biggl|\tilde{b}_{j}^{+,\infty}-\sum_{l=0}^{2k-2}\frac{\alpha_{2k-2-l}}{(l+j+1)!}\tau^{l+j+1}\biggr|=0.
	\end{align*}
	Taking the limit of \eqref{SecondB2}, we get a contradiction.
\end{proof} 

\section*{Acknowledgment}
We would like to thank Prof. Hui Yu for the helpful discussion.
\printbibliography[title=References]

@misc{savin2021halfspace,
	title={Half-space solutions with $7/2$ frequency in the thin obstacle problem}, 
	author={O. Savin and H. Yu},
	year={2021},
	eprint={2109.06399},
	archivePrefix={arXiv},
	primaryClass={math.AP}
}

@article{Silva2014CinftyRO,
	author = {D. De Silva and O. Savin},
	journal = {Indiana University Mathematics Journal},
	number = {5},
	pages = {1575--1608},
	publisher = {Indiana University Mathematics Department},
	title = {$C^{\infty}$
	Regularity of Certain Thin Free Boundaries},
	volume = {64},
	year = {2015}
}

@article{signorini1959questioni,
	title={Questioni di elasticit{\`a} non linearizzata e semilinearizzata},
	author={A. Signorini},
	journal={Rend. Mat. Appl},
	volume={18},
	number={5},
	pages={95--139},
	year={1959}
}

@phdthesis{richardson1978variational,
	title={Variational problems with thin obstacles},
	author={Richardson, David},
	year={1978},
	school={University of British Columbia}
}

@article{ural1987regularity,
	title={Regularity of solutions of variational inequalities},
	author={Ural'tseva, N. Nikolaevna},
	journal={Russian Mathematical Surveys},
	volume={42},
	number={6},
	pages={191},
	year={1987},
	publisher={IOP Publishing}
}

@article{athanasopoulos2006optimal,
	title={Optimal regularity of lower-dimensional obstacle problems},
	author={I. Athanasopoulos, L. Caffarelli},
	journal={Journal of Mathematical Sciences},
	volume={132},
	number={3},
	pages={274--284},
	year={2006},
	publisher={Springer}
}

@InProceedings{almgren1979dirichlet,
	title={Dirichlet's problem for multiple valued functions and the regularity of mass minimizing integral currents},
	author={F. Almgren},
	booktitle={Minimal Submanifolds and Geodesics: Proceedings of the Japan-United States Seminar on Minimal Submanifolds, Including Geodesics, Tokyo 1977},
	publisher={North-Holland},
	year={1979},
	editor={M. Obata}
}

@article{athanasopoulos2008structure,
	title={The structure of the free boundary for lower dimensional obstacle problems},
	author={I. Athanasopoulos, L. Caffarelli, S. Salsa},
	journal={American journal of mathematics},
	volume={130},
	number={2},
	pages={485--498},
	year={2008},
	publisher={Johns Hopkins University Press}
}

@book{petrosyan2012regularity,
	title={Regularity of free boundaries in obstacle-type problems},
	author={A. Petrosyan, H. Shahgholian, Ural'tseva and N. Nikolaevna},
	volume={136},
	year={2012},
	publisher={American Mathematical Soc.}
}

@article{colombo2020direct,
	title={Direct epiperimetric inequalities for the thin obstacle problem and applications},
	author={M. Colombo, L. Spolaor, B. Velichkov},
	journal={Communications on Pure and Applied Mathematics},
	volume={73},
	number={2},
	pages={384--420},
	year={2020},
	publisher={Wiley Online Library}
}

@misc{savin2021fine,
	title={On the fine regularity of the singular set in the nonlinear obstacle problem}, 
	author={O. Savin and H. Yu},
	year={2021},
	eprint={2101.11759},
	archivePrefix={arXiv},
	primaryClass={math.AP}
}

@article{figalli2020generic,
	title={Generic regularity of free boundaries for the obstacle problem},
	author={A. Figalli, X. Ros-Oton, J. Serra},
	journal={Publications math{\'e}matiques de l'IH{\'E}S},
	volume={132},
	number={1},
	pages={181--292},
	year={2020},
	publisher={Springer}
}

@article{garofalo2009some,
	title={Some new monotonicity formulas and the singular set in the lower dimensional obstacle problem},
	author={N. Garofalo, A. Petrosyan},
	journal={Inventiones mathematicae},
	volume={177},
	number={2},
	pages={415},
	year={2009},
	publisher={Springer}
}

\end{document}